\documentclass[12pt]{amsart}
\usepackage[margin=1in]{geometry}
\usepackage[dvipsnames,svgnames,x11names]{xcolor}
\usepackage[hyphens]{url}
\usepackage[colorlinks=true,
            linkcolor=Maroon,
            citecolor=blue,
            urlcolor=blue,
            hypertexnames=false,
            linktocpage]{hyperref}
\usepackage{bookmark}
\usepackage{amsmath,thmtools,mathtools,amssymb}
\usepackage{tikz}
\usepackage{float}
\usetikzlibrary{arrows.meta,calc}
\definecolor{FigureBlue}{HTML}{1F6282}
\definecolor{FigureRed}{HTML}{C1533B}
\definecolor{FigureGreen}{HTML}{347252}
\definecolor{BodySlate}{HTML}{294F60}
\definecolor{SectionPine}{HTML}{486150}
\definecolor{VectorBrick}{HTML}{985044}
\definecolor{FigureInk}{HTML}{25292D}
\definecolor{FigureGraphite}{HTML}{62686D}
\definecolor{FigureMidGray}{HTML}{8A8F93}
\definecolor{FigurePaleFill}{HTML}{F3F4F4}
\mathtoolsset{showonlyrefs=true}
\allowdisplaybreaks

\newcommand{\R}{\mathbb{R}}
\newcommand{\Sn}{S^{n-1}}
\newcommand{\ip}[2]{\langle #1,#2\rangle}
\newcommand{\abs}[1]{\left|#1\right|}
\newcommand{\eq}[1]{\begin{equation}\begin{alignedat}{2}#1\end{alignedat}\end{equation}}

\theoremstyle{plain}
\newtheorem{theorem}{Theorem}[section]

\newtheorem{lemma}[theorem]{Lemma}
\newtheorem{corollary}[theorem]{Corollary}

\theoremstyle{definition}

\theoremstyle{remark}

\numberwithin{equation}{section}

\title[Centro-affine Poincar\'e inequality]{Centro-affine Poincar\'e inequality:\\ Unconditional convex bodies}
\author[Y. Hu, M. N. Ivaki]{Yingxiang Hu, Mohammad N. Ivaki}

\begin{document}

\begin{abstract}
We prove that the centro-affine Poincar\'e inequality holds with constant $n$ for every $C^{\infty}_+$ unconditional convex body and every smooth function with natural orthogonality conditions.
\end{abstract}

\maketitle

\section{Introduction}

Throughout the paper, $n\geq 2$. We use the notation and conventions
introduced in \cite{HI26}. A set is unconditional if it is invariant under every coordinate reflection. Let $K\subset\R^n$ be a $C^{\infty}_+$ unconditional convex body, $h=h_K$ be its support function,
\eq{
X=\bar{\nabla}h+hx
}
be its inverse Gauss map, and let
\eq{
\tau=\bar{\nabla}^2h+h\bar{g},\quad g=\frac{1}{h}\tau,\quad dV_K=h\det(\bar{g}^{ik}\tau_{kj})\,dx=\frac{h}{\mathcal{K}}\,dx.
}
Here $\bar{g}$ and $\bar{\nabla}$ denote the standard round metric and its Levi-Civita connection on $\Sn$. The tensor $g$ is the centro-affine metric, and $dV_K$ is the rescaled cone-volume measure. We denote the centro-affine connection by $\nabla$, and the centro-affine Laplacian by $\Delta=\Delta_K$.

\begin{theorem}\label{thm:main-poincare}
Let $K\subset\R^n$ be a $C^{\infty}_+$ unconditional convex body and $F\in C^{\infty}(\Sn)$ satisfy
\eq{
\int_{\Sn}F\, dV_K=0,\quad \int_{\Sn}F\frac{x_i}{h}\,dV_K=0,\quad i=1,\ldots,n.
}
Then
\eq{\label{eq:main-even-poincare}
n\int_{\Sn}F^2\,dV_K\le \int_{\Sn}\abs{\nabla F}_g^2\,dV_K.
}
Equality holds if and only if $F\equiv 0$.
\end{theorem}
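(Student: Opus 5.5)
Since $K$ is unconditional, $h$ and the measure $dV_K$ are invariant under each coordinate reflection $R_j$, and so are the metric $g$ and the operator $\Delta$. The plan is to decompose $F$ into parity pieces: for each $\varepsilon\in\{0,1\}^n$ let $F_\varepsilon$ be the component of $F$ that is odd under $R_j$ when $\varepsilon_j=1$ and even when $\varepsilon_j=0$. By invariance these $2^n$ pieces are orthogonal both in $L^2(dV_K)$ and in the Dirichlet form $\int\abs{\nabla\cdot}_g^2\,dV_K$. It therefore suffices to prove the inequality for each $F_\varepsilon$ separately. The constraints split the same way. $\int F\,dV_K=0$ constrains only $F_0$, and $\int F x_i/h\,dV_K=0$ constrains only $F_{e_i}$. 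All other components carry no constraint.

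For the unconstrained classes with $\abs{\varepsilon}\ge 2$, and also for $F_0$ (which is fully even), I would invoke the even centro-affine Poincar\'e inequality from \cite{HI26}. That result presumably gives $n\int F^2\le\int\abs{\nabla F}^2$ for origin-symmetric $K$ and even $F$ with $\int F\,dV_K=0$, which covers $F_0$. For $\abs{\varepsilon}$ even, $F_\varepsilon$ is even under $x\mapsto -x$, the antipodal map being the composition of all $R_j$. Its mean vanishes automatically because it is odd in some coordinate, so the even inequality applies directly.

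The odd classes ($\abs{\varepsilon}$ odd) are the main obstacle. For $\varepsilon=e_i$ the extremals of the centro-affine Rayleigh quotient are presumably $x_i/h$, with eigenvalue $n-1$, since $\Delta(x_i/h)=-(n-1)x_i/h$ on these linear functions (this follows from $X$ satisfying the centro-affine structure equations). Orthogonality to $x_i/h$ must then push the next eigenvalue in the class to at least $n$. For $\abs{\varepsilon}\ge3$ odd there is no constraint, so I need a direct bound of at least $n$.

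My approach for the odd classes is a Bochner/Reilly-type argument. I would write $F=\phi/h$, or use the weighted structure, and apply the integrated Bochner formula with the centro-affine Ricci tensor, which involves the cubic form term, so that $\int(\Delta F)^2\ge\frac{n}{n-1}\cdots$. The unconditional symmetry would be used in one of two ways. The first is to multiply $F_\varepsilon$ by $x_j$ for some $j$ with $\varepsilon_j=1$, turning the odd case into an even one under $R_j$. The second is to use the fact that $F_\varepsilon$ vanishes on the coordinate hyperplanes $\{x_j=0\}$ for $\varepsilon_j=1$. This gives a Dirichlet problem on the orthant-type domain $\Sn\cap\{x_j>0\}$, where the domain-monotonicity comparison with the first Dirichlet eigenfunction $x_j/h$ (eigenvalue $n-1$, positive in that half) yields via Barta's trick $\lambda\ge n-1$. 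Strict improvement to $n$ comes from the extra odd symmetry in another coordinate or from orthogonality to $x_j/h$. Concretely, for $\varepsilon=e_i$ I would write $F=u\,x_i/h$ with $u$ even in $x_i$, and compute by integration by parts
\[
\int\abs{\nabla F}^2 - (n-1)\int F^2=\int \abs{\nabla u}^2 (x_i/h)^2\,dV_K .
\]
This reduces the problem to a weighted Poincar\'e inequality for $u$ under the constraint $\int u\,(x_i/h)^2dV_K=0$, needing gap at least $1$. That weighted inequality for the even-in-$x_i$ function $u$ is what I expect to be hardest, and I would attack it with the even inequality of \cite{HI26} applied on a lower-dimensional slice, or with another Bochner step. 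Equality would force every $F_\varepsilon=0$, giving $F\equiv0$.
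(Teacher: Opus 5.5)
\textbf{There is a genuine gap.} Your parity decomposition, and the way the constraints split, match the paper exactly. Your treatment of $F_0$ is fine in substance, because $F_0$ is unconditional and that case is \cite[Thm. 8.12]{HI26}. The gap is in every other class.

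The ``even centro-affine Poincar\'e inequality'' you invoke, for origin-symmetric $K$ and arbitrary even $F$ with constant $n$, is not a theorem of \cite{HI26}. For general origin-symmetric bodies it is the local form of the even log-Brunn--Minkowski (log-Minkowski) conjecture, which is open for $n\ge3$. For unconditional $K$, its content beyond unconditional $F$ is precisely what the present statement establishes; this is why the paper can derive the log-Minkowski inequality for unconditional $K$ as a corollary. So for $\abs{\varepsilon}\ge2$ even your argument is circular.

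For $\abs{\varepsilon}\ge3$ odd, the Bochner/Barta sketch only yields $n-1$, coming from the positive Dirichlet eigenfunction $x_j/h$ on a half-sphere. The improvement to $n$ is exactly the missing step. Global pointwise supersolutions built from $l_il_j$, $(l_il_j)^{\alpha}$ or $P_{ij}$ cannot supply it for $n\ge3$, since $(\Delta+n)(l_il_j)=(2-n)l_il_j+2g(\nabla l_i,\nabla l_j)$ has no sign under unconditionality alone.

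For $\varepsilon=e_i$, your identity $\int\abs{\nabla(ul_i)}_g^2\,dV_K-(n-1)\int u^2l_i^2\,dV_K=\int l_i^2\abs{\nabla u}_g^2\,dV_K$ is correct. However, the weighted gap-one inequality it reduces to is left unproved.

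\textbf{What the paper does instead.} It treats all $\abs{a}\ge2$ uniformly, regardless of the parity of $\abs{a}$, by using only that $F_a$ is odd in two coordinates $x_i,x_j$.
\begin{itemize}
\item It deforms $h_t=he^{tu}$ and slices $K_t$ by the planes $\xi+\operatorname{span}\{E_i,E_j\}$.
\item From $s_{\xi,t}(a)=\min_{b\in F}\{h_t(a+b)-\ip{b}{\xi}\}$ and $\ip{b}{\xi}\ge0$ (a consequence of unconditionality), it gets $\ddot s_{\xi,0}\le \dot s_{\xi,0}^2/s_{\xi,0}$. The section speed is odd under both planar reflections.
\item In the plane, $l_1l_2$ is a genuine supersolution, so every section area is concave at $t=0$. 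Alternatively, one can apply planar log-Brunn--Minkowski to $D_{\xi,t}$ and $D_{\xi,-t}=R_iD_{\xi,t}$.
\item Fubini then gives $\frac{d^2}{dt^2}\operatorname{Vol}(K_t)\big|_{t=0}\le0$, which is the inequality.
\end{itemize}
For $\varepsilon=e_i$, the paper uses the Poincar\'e inequality on the positive orthant $\Omega_+$ from \cite{HI26}. That inequality holds for arbitrary, not necessarily unconditional, functions with zero mean on $\Omega_+$. It is applied to $w=u+tl_i$, with $t$ chosen to kill the orthant mean. Writing $Q(u)=\int(\abs{\nabla u}_g^2-nu^2)\,dV_K$, one has $Q(u)=Q(w)+t^2\int l_i^2\,dV_K$.
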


For unconditional functions, \autoref{thm:main-poincare} was proved in \cite[Thm. 8.12]{HI26} using a flat logarithmic centro-affine geometry on $(0,\infty)^n$. An earlier proof of the inequality, without the characterization of the equality cases, was obtained in \cite{KM22}. \autoref{thm:main-poincare} extends the recent work of \cite{Iff26} from the class of $O(n-1)\times O(1)$-invariant origin-symmetric convex bodies to all unconditional convex bodies. See also \cite{IM23a,Han23}.

As a consequence of the local-to-global principle \cite{BCD17,CHLL20,Put21,Mil25}, we obtain the even log-Minkowski inequality when one of the bodies is unconditional.

\begin{corollary}\label{cor:unconditional-log-minkowski}
Let $K\subset\R^n$ be a $C^{\infty}_+$ unconditional convex body.
Then, for every origin-symmetric convex body $L\subset\R^n$,
\eq{
 \frac{1}{\operatorname{Vol}(K)}
 \int_{\Sn}\log\left(\frac{h_L}{h_K}\right)dV_K
 \geq
\log\left(
 \frac{\operatorname{Vol}(L)}{\operatorname{Vol}(K)}
 \right).
}
Equality holds if and only if $L=cK$ for some $c>0$.
\end{corollary}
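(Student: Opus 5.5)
The plan is to deduce the corollary from \autoref{thm:main-poincare} through the local-to-global principle of \cite{BCD17,CHLL20,Put21,Mil25}, applied with $K$ fixed. By scaling invariance we may normalize $\operatorname{Vol}(L)=\operatorname{Vol}(K)$, so we must show that
\eq{
\Phi(L)=\int_{\Sn}\log h_L\,dV_K
}
is minimized over origin-symmetric $L$ of volume $\operatorname{Vol}(K)$ exactly at $L=K$.

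\textbf{Step 1 (existence and the Euler--Lagrange equation).} A minimizer $L_0$ exists by compactness: Blaschke selection together with the volume normalization, which prevents $L_0$ from degenerating. At $L_0$, the first variation of $\Phi$ under the volume constraint forces the cone-volume measure of $L_0$ to be a constant multiple of $dV_K$. So $L_0$ solves the even $L_0$-Minkowski problem with the same data as $K$, and the corollary reduces to uniqueness for this problem.

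\textbf{Step 2 (local rigidity at $K$).} The second variation of $\Phi$ at $K$ in the direction $h_K e^{tF}$, with $F$ even, is the quadratic form
\eq{
\int_{\Sn}\abs{\nabla F}_g^2\,dV_K-n\int_{\Sn}F^2\,dV_K .
}
Here $F$ is restricted to $\int F\,dV_K=0$, which can be arranged by scaling. For even $F$ the linear conditions $\int F x_i/h\,dV_K=0$ hold automatically, because $h$ and $dV_K$ are even while $x_i$ is odd. \autoref{thm:main-poincare} therefore makes this form positive definite, so $K$ is a strict, nondegenerate local minimizer and an isolated solution of the even Minkowski problem. The theorem in fact gives this rigidity against all $F$, not only even ones, and in particular covers every non-unconditional perturbation.

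\textbf{Step 3 (globalization).} This is where the local-to-global principle enters. I would use Milman's formulation \cite{Mil25}, in which nondegeneracy of the linearized operator at every solution of the equation, combined with a continuity or degree argument along a path of data, yields uniqueness. The path I would take is $K_t$, the Wulff shape of $h_K^{1-t}h_B^{t}$, where $B$ is the unit ball. Each $K_t$ is unconditional, so \autoref{thm:main-poincare} applies uniformly along the path, and at $t=1$ uniqueness for the ball is classical. The main obstacle is justifying nondegeneracy at a hypothetical competing solution $L_0$, which need not be unconditional. To handle it, I would first exploit the symmetry of the data: each coordinate reflection of $L_0$ is again a minimizer. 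I would then try to show that the $\log$-Minkowski average of $L_0$ over the reflection group, namely the Wulff shape of the geometric mean of the reflected support functions, does not increase $\Phi$ and does not decrease volume. This would produce an unconditional minimizer, to which Step 2 and the path argument apply. If that averaging step fails, the fallback is the degree-theoretic version in \cite{Put21,Mil25}, which requires only the Poincaré inequality along the unconditional family $K_t$.

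\textbf{Step 4 (equality).} Uniqueness gives $L_0=K$, and hence the stated inequality. For the equality case, if equality holds then $L/c$ is a minimizer for the appropriate $c>0$, so $L/c=K$ by uniqueness, that is, $L=cK$.
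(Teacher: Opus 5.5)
Your fallback at the end of Step 3 is essentially the paper's proof. The route you actually develop, however, has two genuine problems.

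\textbf{The averaging step.} You need the Wulff shape of the geometric mean of the $h_{\sigma L_0}$ over the reflection group to have volume at least $\operatorname{Vol}(L_0)$. The reflected copies $\sigma L_0$ are origin-symmetric but not unconditional, so this is a case of the $n$-dimensional log-Brunn--Minkowski inequality that is not available; it is of the same strength as what you are trying to prove. The paper can invoke log-Brunn--Minkowski only in the plane, via \cite{BLYZ12}, for the origin-symmetric sections in the second proof of \autoref{lem:two-coordinates-reflections}. So this route does not close.

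It is also unnecessary. The ``main obstacle'' comes from your first description of the principle, which asks for nondegeneracy at every solution, and that is not the version used here. The paper applies \cite[Thm.~2.6]{IM23a} with $p=0$ to the family $\mathcal{F}$ of all $C^\infty_+$ unconditional bodies. As used there, its hypotheses are only that $\lambda_{1,e}(-\Delta_{K'})>n$ for every $K'\in\mathcal{F}$ and that the path from $B$ to $K$ stays in $\mathcal{F}$. Nothing is asked of a competing, possibly non-unconditional, solution $L_0$.

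\textbf{The path.} The path you chose is not admissible. The Wulff shape of $h_K^{1-t}h_B^{t}$ is in general not $C^\infty_+$, because a geometric mean of support functions need not satisfy $\bar{\nabla}^2 f+f\bar g\geq 0$. This already fails for a sufficiently eccentric coordinate ellipse at $t=1/2$, near the short-axis direction, and the Wulff shape then has corners. So \autoref{thm:main-poincare}, which is stated for $C^\infty_+$ bodies, cannot be applied along your path. The paper uses the Minkowski segment $h_t=(1-t)+th_K$, which stays $C^\infty_+$ and unconditional.

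\textbf{What the principle needs.} The principle requires the strict spectral gap $\lambda_{1,e}(-\Delta_{K'})>n$, not just positivity of the quadratic form on each test function. The paper obtains the gap by noting that $\lambda_{1,e}$ is attained by a smooth even eigenfunction. Such a function automatically has zero mean and is orthogonal to the odd functions $l_i$, so the strict case of \autoref{thm:main-poincare} applies to it.

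\textbf{A smaller error in Step 2.} Your remark that the theorem gives rigidity ``against all $F$'' is wrong for non-even $F$. For such $F$ the conditions $\int F l_i\,dV_K=0$ are a real restriction. Indeed $F=l_i$ has zero mean and gives $\int\abs{\nabla F}_g^2\,dV_K-n\int F^2\,dV_K=-\int l_i^2\,dV_K<0$. This is why the whole argument has to stay in the even class.
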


\begin{proof}
Let $\lambda_{1,e}(-\Delta_K)$ denote the first nonzero eigenvalue of $-\Delta_K$ on even functions. By standard spectral theory and elliptic regularity, $\lambda_{1,e}(-\Delta_K)$ is attained by a smooth even eigenfunction. In view of \autoref{thm:main-poincare}, $\lambda_{1,e}(-\Delta_K)>n$.

Let $\mathcal{F}$ be the family of $C^{\infty}_+$ unconditional convex bodies. The convex bodies whose support functions are $ h_t=(1-t)+th_K$, $t\in[0,1]$, remain in $\mathcal{F}$. Now applying \cite[Thm. 2.6]{IM23a} to $\mathcal{F}$ with $p=0$ proves the log-Minkowski inequality for $K$, including the characterization of equality.
\end{proof}

We also obtain the following uniqueness result in the supercritical range $p\in [-n-1,-n)$ when $n\ge 3$.

\begin{corollary}\label{cor:uniqueness}
Let $n\ge 3$ and $-n-1\leq p<-n$. Suppose $K\subset\R^n$ is a $C^{\infty}_+$ unconditional convex body such that $\frac{h^{1-p}}{\mathcal{K}}=1$. Then $K$ is the unit ball.
\end{corollary}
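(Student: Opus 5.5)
The plan is to apply \autoref{thm:main-poincare} to a test function built from the solution $h$, and to use the equation to produce an integral identity that pushes the other way. Combined with the equality case, this should force $h$ to be constant.

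\emph{Setup.} Writing $\mathcal{K}$ via the equation $h^{1-p}/\mathcal{K}=1$, we get $1/\mathcal{K}=h^{p-1}$, so
\[
dV_K=\frac{h}{\mathcal{K}}\,dx=h^{p}\,dx .
\]
Because $K$ is unconditional, $h$ is invariant under all coordinate reflections. Any function of the form $F=\Phi(h,|\bar\nabla h|^2)$ is then invariant too, whereas $x_i/h$ is odd under the reflection $x_i\mapsto -x_i$. Hence the conditions $\int F\frac{x_i}{h}\,dV_K=0$ hold automatically for such $F$, and only the mean-zero condition must be arranged by subtracting a constant.

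\emph{Choice of test function.} The natural candidates are $F=h^{\alpha}-c_\alpha$ or $F=|X|^2-c$, with $c_\alpha$ the $dV_K$-mean of $h^{\alpha}$ and $\alpha$ depending on $p$. I would first try $F=h^{\alpha}-c_\alpha$.
\begin{itemize}
\item Since $g=\tau/h$, we have $|\nabla F|_g^2=\alpha^2h^{2\alpha-1}\tau^{-1}(\bar\nabla h,\bar\nabla h)$.
\item I would compute $\Delta_K h^{\alpha}$ using the formula $\Delta_K u=\frac{h}{\det\tau}\,\bar\nabla_i\bigl((\det\tau)\,\tau^{ij}\bar\nabla_j u\bigr)\cdot(\text{weight})$ from \cite{HI26}. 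Here the divergence-free property of the cofactor matrix of $\tau$ and the identity $\tau^{ij}\tau_{ij}=n-1$ reduce everything to expressions in $h$, $|\bar\nabla h|^2$ and $\operatorname{tr}\tau$.
\item I would then integrate $-F\Delta_K F$ against $dV_K=h^p\,dx$.
\item Finally, I would rewrite $\det\tau=h^{1-p}$ via the equation, and use the Minkowski-type identities $\int x\,h^{p}\,dx$ and $\int (h\,\operatorname{tr}\tau-\ldots)$ obtained by differentiating the equation or by integrating by parts against the round metric.
\end{itemize}
The goal is an identity of the form
\[
\int|\nabla F|_g^2\,dV_K=n\int F^2\,dV_K-\beta(p)\int(\text{nonnegative quantity})\,dV_K,
\]
with $\beta(p)\ge 0$ exactly when $-n-1\le p<-n$ for a suitable $\alpha=\alpha(p)$. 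This resembles the Brendle--Choi--Daskalopoulos / Ivaki--Milman approach, where the endpoint $p=-n-1$ is the affine-invariant (ellipsoid) case and $p=-n$ is the critical one.

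\emph{Conclusion.} Given such an identity, \autoref{thm:main-poincare} yields
\[
n\int F^2\,dV_K\le \int|\nabla F|_g^2\,dV_K\le n\int F^2\,dV_K,
\]
so equality holds and $F\equiv0$. Then $h^{\alpha}$ is constant, so $h$ is constant. The equation $h^{1-p}=\mathcal{K}=h^{-(n-1)}$ then gives $h^{n-p}=1$, hence $h\equiv1$ and $K$ is the unit ball. At $p=-n-1$ one must exclude origin-centred ellipsoids other than the ball. These are not all unconditional in a general frame, but coordinate-aligned ones are unconditional, so this endpoint needs care. I would check whether the equation $h^{n+2}=\mathcal{K}$ with normalization constant $1$ still forces the ball among coordinate-aligned ellipsoids; the volume normalization $\det=1$ together with the constant $1$ should do it.

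\emph{Main obstacle.} The hardest step is finding the right test function and exponent $\alpha(p)$ so that the remainder term has a sign throughout $[-n-1,-n)$. This is also where $n\ge3$ should enter, likely through a Cauchy--Schwarz $|\,\cdot\,|^2\le(n-1)\,\tau$-trace estimate that needs $n-1\ge2$. If pure powers of $h$ fail, I would next try $F=|X|^2h^{\gamma}$ minus its mean, which interacts better with the centro-affine Laplacian since $\Delta_K X$ is proportional to $X$.
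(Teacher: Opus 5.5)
Your proposal has a genuine gap. The decisive step is never produced: you need an identity forcing $\int_{\Sn}\abs{\nabla F}_g^2\,dV_K\le n\int_{\Sn}F^2\,dV_K$ for an admissible $F$ built from the solution, and the candidates you suggest do not supply one.

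For $F=h^{\alpha}-c$ you have $\abs{\nabla F}_g^2=\alpha^2h^{2\alpha-1}\tau^{-1}(\bar\nabla h,\bar\nabla h)$. Integrating by parts with the divergence-free cofactor matrix of $\tau$ gives, for $\alpha\neq-\frac12$,
\[
\int_{\Sn}\abs{\nabla h^{\alpha}}_g^2\,dV_K=\frac{\alpha^2}{2\alpha+1}\int_{\Sn}h^{2\alpha+1}\big(h\,\sigma_{n-2}(\tau)-(n-1)\,\sigma_{n-1}(\tau)\big)\,dx,
\]
where $\sigma_k(\tau)$ are the elementary symmetric functions of the principal radii. The equation prescribes only $\sigma_{n-1}(\tau)=\det\tau=h^{p-1}$. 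For $n\ge3$ it does not determine $\sigma_{n-2}(\tau)$; Maclaurin gives only a one-sided bound. So the Dirichlet energy of $h^{\alpha}$ cannot be reduced to integrals of $h$ and $\bar\nabla h$, and you give no mechanism producing a signed remainder on $[-n-1,-n)$.

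Your fallback rests on a false claim. By Codazzi and $\det\tau=h^{p-1}$, one gets $\Delta_KX=-(n-1)X+(n+p)\bar\nabla h$, which is proportional to $X$ only when $p=-n$. Two further points:
\begin{itemize}
\item $p=-n$, not $p=-n-1$, is the affine-invariant (ellipsoid) case.
\item Your route only ever uses the Poincar\'e inequality for unconditional functions, which was already available in \cite[Thm. 8.12]{HI26}.
\end{itemize}

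The missing idea is to use odd, vector-valued test functions. The paper takes $F_i=X_i-V\sum_j(A^{-1})_{ij}l_j$, with $A_{ij}=\int_{\Sn}l_il_j\,dV_K$ and $V=\frac1n\int_{\Sn}dV_K$. For unconditional $K$, $F_i$ is odd in $x_i$ and even in the other coordinates. So orthogonality to $l_i$ is a real constraint: it is what the $A^{-1}$-correction enforces, and it is why \autoref{cor:one-odd-orthogonal} is needed.

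Since $\bar\nabla X=\tau$, the inverse $\tau^{-1}$ cancels after summing: $\sum_i\abs{\nabla X_i}_g^2=h(\bar\Delta h+(n-1)h)$. The argument then combines the following.
\begin{itemize}
\item The identity $\int_{\Sn}X_il_j\,dV_K=V\delta_{ij}$ and the relation $\Delta l_j=-(n-1)l_j$.
\item The isotropy $\int_{\Sn}x_ix_j\,dV_K=V\delta_{ij}$. This is where the equation with $p\neq-n$ enters.
\item Cauchy--Schwarz, which gives $V^2\operatorname{tr}(A^{-1})\le\int_{\Sn}h^2\,dV_K$.
\end{itemize}
With $dV_K=h^p\,dx$, the summed Poincar\'e inequalities reduce to $(n+p+1)\int_{\Sn}h^p\abs{\bar\nabla h}^2\,dx\le0$. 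This gives $h$ constant for $p>-n-1$.

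At $p=-n-1$, equality forces $F_i\equiv0$. Hence $X=VA^{-1}x/h$ and $h^2=x^T(VA^{-1})x$, so $K$ is an ellipsoid. The equation $h^{-n-2}=\det(VA^{-1})\,h^{-n-1}$ then forces $h$ to be constant. The ellipsoid is thus excluded by the equation itself, not by a separate volume normalization.
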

For $n=2$, Andrews \cite{And03} established the uniqueness without any symmetry assumption for $p\in[-7,-2)$. Moreover, in \cite{Du25}, Du proves the uniqueness for strongly symmetric convex bodies in the supercritical range $p\in(-2n-3,-n)$.

Two additional ingredients are needed to pass from unconditional functions to the class in \autoref{thm:main-poincare}. We prove the inequality first for functions that are odd in one coordinate variable and even in all the others, and then for functions that are odd in at least two coordinates variables. The first case follows from our inequality in \cite{HI26}; see \autoref{thm:log-arbitrary}. Before treating the second case, we discuss a few attempts based on factorization.

Let $E_1,\ldots,E_n$ denote the standard coordinate basis of $\R^n$.  Put
\eq{
 l_i=\frac{x_i}{h},\quad P_{ij}=l_iX_j+l_jX_i,\quad i\ne j.
}
We will repeatedly use the standard identity
\eq{\label{eq:conormal-identity}
 \Delta l_i+(n-1)l_i=0,\quad i=1,\ldots,n.
}
If $F$ is odd in both $x_i$ and $x_j$ variables, then both $F/(l_il_j)$ and $F/P_{ij}$ are smooth. Thus one may write $F=\Psi u$, with $\Psi=l_il_j$ or $\Psi=P_{ij}$, and use the identity
\eq{
\int_{\Sn}\abs{\nabla F}_g^2\,dV_K-n\int_{\Sn}F^2\,dV_K
=\int_{\Sn}\Psi^2\abs{\nabla u}_g^2\,dV_K-\int_{\Sn}\Psi(\Delta\Psi+n\Psi)u^2\,dV_K.
}
Consequently, the desired inequality for $F$ would follow from
\eq{
 \Psi(\Delta\Psi+n\Psi)\le 0.
}

This direct argument is useful under additional assumptions. For example, if
\eq{
 h\bar{\nabla}\log\frac{h^{n+1}}{\mathcal{K}}\le nX
}
holds on the positive orthant, then $P_{ij}$ satisfies
$P_{ij}(\Delta P_{ij}+nP_{ij})\le 0$. 

The function $\Psi=l_1l_2$ also gives a \emph{complete}  proof for every unconditional body in $\R^2$. Indeed, on $S^1\cap\{x_1>0,\,x_2>0\}$,
\eq{
 \Delta(l_1l_2)+2l_1l_2=-\frac{2X_1X_2}{h^3(h''+h)}<0.
}

For $n>2$, however, one would need non-positivity of the right-hand side of
\eq{
(\Delta+n)(l_il_j)
=(2-n)l_il_j+2g(\nabla l_i,\nabla l_j).
}
The right-hand side has no definite sign under unconditionality alone.

Let us also mention a half-space capillary example where $l_il_j$ is again successful. Let $\theta\in(0,\pi/2)$,
\eq{
 \Sn_{\theta}=\{x\in\Sn:x_n\ge\cos\theta\},\quad
 \ell(x)=1-\cos\theta\,x_n,
}
and $g_{\theta}$, $dV_{\theta}$, and $\Delta_{\theta}$ be the centro-affine metric, cone-volume measure, and Laplacian of the capillary cap $ \Sn_{\theta}-\cos\theta\, E_n$ whose support function is $\ell$. Let $\mu$ be the $\bar{g}$-unit outward normal to $\partial\Sn_{\theta}$ in $\Sn_{\theta}$. For $1\le i<j<n$, $ \psi_{ij}=\frac{x_ix_j}{\ell^2}$ satisfies
\eq{
 \bar{\nabla}_{\mu}\psi_{ij}&=0\quad\text{on }\partial\Sn_{\theta},\\
 \psi_{ij}(\Delta_{\theta}\psi_{ij}+2n\psi_{ij})
 &=-\frac{2\cos\theta\,(x_n-\cos\theta)(x_ix_j)^2}{\ell^5}\le 0.
}
Together with a decomposition of $F$ and the inequality for the unconditional component of the decomposition from \cite[Thm. 1.2]{CHI26}, this yields
\eq{
 2n\int_{\Sn_{\theta}}F^2\,dV_{\theta}
 \le\int_{\Sn_{\theta}}\abs{\nabla F}_{g_{\theta}}^2\,dV_{\theta}
}
for mean-zero functions satisfying the following horizontal-evenness and Neumann conditions:
\eq{
 F(-x_1,\ldots,-x_{n-1},x_n)=F(x_1,\ldots,x_{n-1},x_n),\quad \bar{\nabla}_{\mu}F=0.
}

We omit the details, since we expect the same inequality to hold without the horizontal evenness assumption for all Neumann functions satisfying the natural orthogonality conditions against the capillary functions $l_i=x_i/\ell$:
\eq{
 \int_{\Sn_{\theta}}F\,dV_{\theta}=0,
 \quad
 \int_{\Sn_{\theta}}Fl_i\,dV_{\theta}=0, \quad i=1,\ldots,n-1.
}

Powers of $l_il_j$ have also been considered in \cite{KLR25}. On the set where $l_il_j>0$, the function $\Psi=(l_il_j)^{\alpha}$ satisfies
\eq{\label{eq:intro-power-supersolution}
\frac{\Delta\Psi}{\Psi}+n
&=n-2\alpha(n-1)-\alpha(1-\alpha)\left(\frac{\abs{\nabla l_i}_g^2}{l_i^2}
+\frac{\abs{\nabla l_j}_g^2}{l_j^2}\right)+2\alpha^2\frac{g(\nabla l_i,\nabla l_j)}{l_il_j}.
}
For $n\ge 3$ and $\alpha=n/(2(n-1))$, this leads to the sufficient condition from \cite[Thm. 1.9]{KLR25}, written in terms of the Minkowski functional $\varphi=\|\,\cdot\,\|_K$ as
\eq{
 \frac{\varphi_{ii}}{\varphi_i^2}+\frac{\varphi_{jj}}{\varphi_j^2}
 \ge \frac{2n}{n-2}\frac{\varphi_{ij}}{\varphi_i\varphi_j}.
}
In particular, this applies when the mixed derivatives $\varphi_{ij}$ are non-positive; this includes the $\ell_q$-models for all $q\geq 1$; see \cite[Cor. 1.6]{KLR25}.

There is another tempting approach suggested by the logarithmic centro-affine geometry. On the positive orthant
\eq{
 \Omega_+:=\{x\in\Sn:x_i>0,\ i=1,\ldots,n\}
}
let $p_i=\frac{x_iX_i}{h}$, and let $g_{\log}$ and $\mathcal{L}_0^{\log}$ denote the logarithmic centro-affine metric and the weighted Laplacian defined in \cite[Sec. 8]{HI26}. We have
\eq{
\mathcal{L}_0^{\log}p_i+np_i=1.
}
Unconditionality yields $X_k=x_kB_k$ with $B_k$ smooth and positive. Hence
\eq{
 \Psi_{ij}=\sqrt{p_ip_j}=\frac{x_ix_j\sqrt{B_iB_j}}{h}
}
is a natural candidate for applying the factorization procedure. Indeed, we have
\eq{
 \frac{\mathcal{L}_0^{\log}\Psi_{ij}}{\Psi_{ij}}+n
 &=-\frac{1}{4}\abs{\nabla^{\log}\log(p_i/p_j)}_{g_{\log}}^2
 +\frac{1}{2}\left(\frac{1}{p_i}+\frac{1}{p_j}\right),\\
 \abs{\tilde{\nabla}\log(p_i/p_j)}_{g_{\mathrm{av}}}^2
 &=2\left(\frac{1}{p_i}+\frac{1}{p_j}\right),
}
where $g_{\mathrm{av}}:=\frac{1}{2}\sum_{k=1}^n\frac{dp_k^2}{p_k}$ and $\tilde{\nabla}$ denotes its Levi-Civita connection. If $g_{\mathrm{av}}\geq g_{\log}$ (we always have $g_{\mathrm{av}}\geq 2g$), then
\eq{
 \mathcal{L}_0^{\log}\Psi_{ij}+n\Psi_{ij}\leq 0.
}
For coordinate ellipsoids, $g_{\log}=g_{\mathrm{av}}=2g$, and $(\mathcal{L}_0^{\log}+n)\Psi_{ij}=0$; see \cite[Prop. 8.2]{HI26}.

For a general unconditional convex body in dimension $n\ge 3$, unconditionality alone controls neither the mixed term in \eqref{eq:intro-power-supersolution}, nor the analogous terms involving $P_{ij}$, nor the comparison $g_{\mathrm{av}}\geq g_{\log}$. Consequently, obtaining a global pointwise supersolution appears difficult. Nevertheless, our argument follows the same underlying principle after passing to two-dimensional sections. Under the deformation $h_t=he^{tu}$, the support function $s_{\xi,t}$ of $D_{\xi,t}=K_t\cap(\xi+E)-\xi$, where $E=\operatorname{span}\{E_i,E_j\}$, satisfies
\eq{
\left.\frac{d^2}{dt^2}\right|_{t=0}
\log s_{\xi,t}(a)\leq 0,
\quad a\in S^1.
}
Moreover, the inherited reflection symmetries ensure that the initial speed $\dot{s}_{\xi,0}/s_{\xi,0}$ is odd with respect to both coordinate reflections. \autoref{lem:planar-two-reflections}, together with the above pointwise inequality, then yields concavity of the sectional area at $t=0$. Now integration over the sections yields
\eq{
\left.\frac{d^2}{dt^2}\right|_{t=0}
\operatorname{Vol}(K_t)\leq 0.
}
In this sense, the requisite supersolution structure is realized not globally on $\Sn$, but fiberwise through the two-dimensional sections of $K$. The first proof below follows this route. We also provide a shorter second proof relying on the planar log-Brunn--Minkowski inequality applied directly to $D_{\xi,t}$ and $D_{\xi,-t}$.

\section{Across coordinate walls}

For $i\in\{1,\ldots,n\}$, let $R_i$ be the reflection in the coordinate hyperplane $E_i^{\perp}$:
\eq{
 R_i x=x-2\ip{x}{E_i}E_i.
}
We say a function $f$ is even, respectively odd, with respect to $R_i$ if
$f\circ R_i=f$, respectively $f\circ R_i=-f$.

\begin{lemma}\label{lem:double-odd-division}
Let $n\geq2$ and $f\in C^{\infty}(S^{n-1})$ be odd with respect to $R_i$ and $R_j$ with $i\ne j$. Then $f(x)=x_ix_jq(x)$ for some $q\in C^{\infty}(S^{n-1})$. In particular, near $S^{n-1}\cap\{x_i=x_j=0\}$,
\eq{
 f=O(x_i^2+x_j^2),\quad
 \bar{\nabla} f=O\big((x_i^2+x_j^2)^{1/2}\big).
}
\end{lemma}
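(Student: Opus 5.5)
Extend $f$ to a smooth function on a neighborhood of the sphere and divide there. Concretely, let $F(y)=\chi(|y|)f(y/|y|)$ on $\R^n\setminus\{0\}$, where $\chi$ is a cutoff equal to $1$ near $|y|=1$ and supported in $(1/2,2)$. Then $F$ is smooth on $\R^n$ and has compact support away from the origin. Since $R_i$ and $R_j$ are orthogonal and preserve $|y|$, $F$ is again odd under $R_i$ and $R_j$.

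\textbf{First division.} Oddness under $R_i$ forces $F=0$ on $\{y_i=0\}$. Hadamard's lemma in the variable $y_i$ gives
\[
F(y)=y_i\int_0^1 \partial_iF(y_1,\dots,sy_i,\dots,y_n)\,ds=:y_iG(y),
\]
with $G$ smooth. Differentiating under the integral shows that $G$ is smooth. Moreover, $G\circ R_i=G$, and $G\circ R_j=-G$ because $R_j$ commutes with the substitution $y_i\mapsto sy_i$ (here $i\ne j$ is used).

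\textbf{Second division.} Repeat the argument in the variable $y_j$: $G$ vanishes on $\{y_j=0\}$, so $G=y_jH$ with $H$ smooth. Setting $q=H|_{\Sn}$ gives $f=x_ix_jq$ on $\Sn$. No step here is a real obstacle; the only care needed is that the second division preserves smoothness, and Hadamard's lemma handles this uniformly. An alternative is to use local coordinates on $\Sn$ near the codimension-two set $\{x_i=x_j=0\}$ in which $x_i,x_j$ are coordinate functions. They can be, since $dx_i$ and $dx_j$ are independent on $\Sn$ there: the relevant tangent-space projections are nondegenerate because $|x|=1$ while $x_i=x_j=0$. Away from that set one divides directly wherever $x_ix_j\neq0$. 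Patching, however, needs care, so the global extension argument is the cleaner one.

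\textbf{Estimates.} Write $r^2=x_i^2+x_j^2$. Since $q$ is bounded on the compact sphere, $|f|\le |x_ix_j|\,\|q\|_\infty\le \tfrac12 r^2\|q\|_\infty$. For the gradient,
\[
\bar\nabla f=q\,(x_j\bar\nabla x_i+x_i\bar\nabla x_j)+x_ix_j\bar\nabla q .
\]
The functions $|\bar\nabla x_k|$ and $|\bar\nabla q|$ are bounded, so $|\bar\nabla f|\le C(|x_i|+|x_j|+r^2)=O(r)$ near the set, as claimed.
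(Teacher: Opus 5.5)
Your proof is correct and follows the same route as the paper. You extend $f$ off the sphere, apply Hadamard's lemma first in $x_i$ and then in $x_j$ while tracking the parities of the quotient, restrict back to $\Sn$, and read the estimates off from $f=x_ix_jq$. The only difference is cosmetic: you use a compactly supported radial-cutoff extension to all of $\R^n$, which makes the division global and avoids the origin, whereas the paper extends $f$ homogeneously to $\R^n\setminus\{0\}$ and applies the integral formula locally near $\{x_k=0\}$.
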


\begin{proof}
We extend $f$ to $\R^n\setminus\{0\}$ as a one-homogeneous function. Note that a smooth function $\phi$ that is odd in the $x_k$-variable, locally near $\{x_k=0\}$, can be written as
\eq{
 \phi(x)=x_k\int_0^1
 \partial_k\phi(x_1,\ldots,tx_k,\ldots,x_n)\,dt.
}
Hence, the quotient $\phi/x_k$ is smooth and has the same parity as $\phi$ in every other variable. Applying this first with $k=i$ and then with $k=j$, we obtain $f=x_ix_jq$ on $S^{n-1}$, with some smooth $q$, and the estimates follow immediately.
\end{proof}

\begin{lemma}\label{lem:planar-two-reflections}
Let $K\subset\R^2$ be a $C^{\infty}_+$ unconditional convex body and $v\in C^{\infty}(S^1)$ be odd with respect to both coordinate reflections. Then
\eq{
 2\int_{S^1}v^2\,dV_K\leq \int_{S^1}\abs{\nabla v}_{g}^2\,dV_K.
}
The inequality is strict unless $v\equiv 0$.
\end{lemma}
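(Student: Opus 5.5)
Since $v$ is odd with respect to both coordinate reflections of $\R^2$, I would follow the factorization principle from the introduction with $\Psi=l_1l_2$. By \autoref{lem:double-odd-division}, $v=x_1x_2q$ with $q$ smooth. Because $h>0$, we may write $v=\Psi u$ with $u=h^2q$ smooth on $S^1$.

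Next I would use the integration-by-parts identity from the introduction, now with $n=2$:
\eq{
\int_{S^1}\abs{\nabla v}_g^2\,dV_K-2\int_{S^1}v^2\,dV_K
=\int_{S^1}\Psi^2\abs{\nabla u}_g^2\,dV_K-\int_{S^1}\Psi(\Delta\Psi+2\Psi)u^2\,dV_K.
}
The identity comes from expanding $\abs{\nabla(\Psi u)}_g^2$ and integrating the cross term $\frac12 g(\nabla\Psi^2,\nabla u^2)$ by parts against $dV_K$. This uses that $\Delta$ is the Laplacian that is self-adjoint with respect to $dV_K$, which is the convention of \cite{HI26}. Everything is smooth on the closed manifold $S^1$, so there are no boundary terms.

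The key pointwise fact is the one already stated: on the open positive quadrant,
\eq{
\Delta(l_1l_2)+2l_1l_2=-\frac{2X_1X_2}{h^3(h''+h)}.
}
I would verify it by a direct computation with $g=\frac{h''+h}{h}d\theta^2$ and $dV_K=h(h''+h)\,d\theta$, using $\Delta l_i+l_i=0$ from \eqref{eq:conormal-identity}. This is the one step that needs real computation. Unconditionality gives $X_k=x_kB_k$ with $B_k>0$, so $X_1X_2$ has the same sign as $x_1x_2$ in each open quadrant. The left-hand side is equivariant under the reflections, since $\Psi$ is odd under each reflection and $\Delta$ commutes with them. Hence
\eq{
\Psi(\Delta\Psi+2\Psi)=-\frac{2x_1x_2X_1X_2}{h^5(h''+h)}\le 0
}
everywhere on $S^1$, and it vanishes only on the four coordinate points. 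Both integrals on the right of the identity are therefore nonnegative, which gives the inequality.

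For the equality case, suppose equality holds. Then $\Psi^2\abs{\nabla u}_g^2=0$ and $\Psi(\Delta\Psi+2\Psi)u^2=0$ almost everywhere. The second condition forces $u=0$ off the four coordinate points, so $u\equiv0$ by continuity, and hence $v\equiv 0$.

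The main obstacle is the explicit curvature computation of $(\Delta+2)(l_1l_2)$ and confirming that the sign is strict in the open quadrants. Everything else follows formally from \autoref{lem:double-odd-division} and the factorization identity.
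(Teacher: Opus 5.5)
Your proposal is correct and follows the paper's proof essentially step for step: you factor $v=\Psi u$ with $\Psi=l_1l_2$ via \autoref{lem:double-odd-division}, apply the integration-by-parts identity, and use the pointwise sign $\Psi(\Delta\Psi+2\Psi)=-2x_1x_2X_1X_2/\big(h^5(h''+h)\big)\le 0$, which is strict off the coordinate axes and gives the equality case. Your reflection argument for extending the sign from the positive quadrant is harmless but unnecessary, since $(\Delta+2)(l_1l_2)=2g(\nabla l_1,\nabla l_2)$ holds on all of $S^1$.
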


\begin{proof}
Let $x(\theta)=(\cos\theta,\sin\theta)$ for $\theta\in(0,2\pi)$. We have
\eq{
X(\theta)=\bar{\nabla} h+hx=(h\cos\theta-h'\sin\theta,\ h\sin\theta+h'\cos\theta).
}

Since $K$ is unconditional, $x_iX_i\geq 0$ for $i=1,2$. Moreover, we have
\eq{
l_1(\theta)=\frac{\cos\theta}{h(\theta)},\quad
l_2(\theta)=\frac{\sin\theta}{h(\theta)},
}
and hence
\eq{
l_1'(\theta)&=-\frac{h\sin\theta+h'\cos\theta}{h^2}=-\frac{X_2}{h^2},\\
l_2'(\theta)&=\frac{h\cos\theta-h'\sin\theta}{h^2}=\frac{X_1}{h^2}.
}
On the other hand, the centro-affine metric of $K$ is
\eq{
g=\frac{h''+h}{h}\,d\theta^2.
}
Therefore, by \cite[Lem. 2.3]{HI26} and \eqref{eq:intro-power-supersolution}, applied with $\Psi=l_1l_2$, we obtain
\eq{
 \Psi(\Delta\Psi+2\Psi)
 =-\frac{2x_1x_2X_1X_2}{h^5(h''+h)}\leq 0,
}
with strict inequality whenever $x_1x_2\neq0$.

By \autoref{lem:double-odd-division}, $v/(x_1x_2)$ extends smoothly across the coordinate axes, and we may write $v=\Psi f$, where $f$ is a smooth function. Since $\Psi$ vanishes at the endpoints of each quadrant, integration by parts yields
\eq{
 \int_{S^1}\left(|\nabla v|_{g}^2-2v^2\right)dV_K
 =\int_{S^1} \Psi^2|\nabla f|_{g}^2\,dV_K
 -\int_{S^1} f^2\Psi (\Delta\Psi+2\Psi)\,dV_K.
}
Both terms on the right are nonnegative, and the second term is positive unless $v\equiv 0$.
\end{proof}

\begin{lemma}\label{lem:area-second-variation}
Let $(D_t)_{|t|<\delta}$ be a smooth family of $C^{\infty}_+$ convex bodies in $\R^2$, with the origin in their interiors and support functions $s_t$, and let $A(t)=\operatorname{Area}(D_t)$. Here dots denote differentiation with respect to $t$. Put $v=\dot{s}_0/s_0$. Then
\eq{
 \ddot{A}(0)
 &=2\int_{S^1}v^2\,dV_{D_0}
 -\int_{S^1}|\nabla v|_{g_{D_0}}^2\,dV_{D_0}+\int_{S^1}\left(\ddot{s}_0
 -\frac{\dot{s}_0^2}{s_0}\right)dS_{D_0}.
}
If $D_0$ is unconditional, $v$ is odd with respect to both coordinate reflections and $\ddot{s}_0\leq \frac{\dot{s}_0^2}{s_0}$, then
\eq{
 \ddot{A}(0)\leq 0.
 }
\end{lemma}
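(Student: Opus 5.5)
The plan is to compute $A(t)$ from the planar support function, differentiate twice at $t=0$, and rewrite the second-order part in the centro-affine notation of \cite{HI26}. The sign statement will then follow from \autoref{lem:planar-two-reflections}.

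\textbf{Step 1: the area formula and its derivatives.} In $\R^2$,
\[
A(t)=\frac12\int_{S^1}s_t(s_t''+s_t)\,d\theta .
\]
Differentiating and using the symmetry of $\int \phi(\psi''+\psi)\,d\theta$ gives
\[
\dot A=\int_{S^1}\dot s_t\,(s_t''+s_t)\,d\theta,\qquad
\ddot A(0)=\int_{S^1}\ddot s_0\,(s_0''+s_0)\,d\theta+\int_{S^1}\dot s_0\,(\dot s_0''+\dot s_0)\,d\theta .
\]
Here $dS_{D_0}=(s_0''+s_0)\,d\theta$.

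\textbf{Step 2: rewriting the second-order term.} Write $\dot s_0=s_0v$ and denote $s=s_0$. Integration by parts gives
\[
\int \dot s_0(\dot s_0''+\dot s_0)\,d\theta=\int\big(s^2v^2-((sv)')^2\big)\,d\theta .
\]
To pass to the centro-affine quantities, recall that $g=\frac{s''+s}{s}\,d\theta^2$ and $dV=s(s''+s)\,d\theta$. Hence
\[
|\nabla v|_g^2\,dV=s^2(v')^2\,d\theta .
\]
Expanding $((sv)')^2=s^2v'^2+2ss'vv'+s'^2v^2$ and integrating $2ss'vv'=ss'(v^2)'$ by parts gives $-\int (s'^2+ss'')v^2$. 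This yields
\[
\int\big(s^2v^2-((sv)')^2\big)\,d\theta=\int\big(s^2+ss''\big)v^2\,d\theta-\int s^2v'^2\,d\theta=\int v^2\,dV-\int|\nabla v|_g^2\,dV .
\]
We also have
\[
\int\frac{\dot s_0^2}{s}\,dS=\int s v^2(s''+s)\,d\theta=\int v^2\,dV .
\]
Adding and subtracting this term, we obtain
\[
\ddot A(0)=\int\Big(\ddot s_0-\frac{\dot s_0^2}{s}\Big)dS+2\int v^2\,dV-\int|\nabla v|_g^2\,dV,
\]
which is the claimed formula. The only delicate point is this bookkeeping: checking that the two-dimensional weights combine exactly into $dV_{D_0}$ and $|\nabla v|^2_g$. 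No step is a real obstacle.

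\textbf{Step 3: the sign.} Assume $D_0$ is unconditional and $v$ is odd with respect to both coordinate reflections. Then \autoref{lem:planar-two-reflections} gives
\[
2\int v^2\,dV-\int|\nabla v|_g^2\,dV\le 0 .
\]
The assumption $\ddot s_0\le \dot s_0^2/s_0$, together with $dS_{D_0}\ge0$, makes the last integral nonpositive. Therefore $\ddot A(0)\le 0$.
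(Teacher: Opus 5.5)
Your proof is correct and follows essentially the paper's route: differentiate the area twice, integrate by parts to obtain $2\int v^2\,dV_{D_0}-\int|\nabla v|_{g_{D_0}}^2\,dV_{D_0}$ plus the $(\ddot s_0-\dot s_0^2/s_0)$ term, and then invoke \autoref{lem:planar-two-reflections} together with $dS_{D_0}\ge 0$ for the sign. The only difference is presentation: you compute explicitly in the angle coordinate from $A=\frac12\int s(s''+s)\,d\theta$, whereas the paper writes $\dot s_t=f_ts_t$ and uses the intrinsic formulas $\dot A=\int f_t\,dV_{D_t}$ and $\ddot A=\int \dot f_t\,dV_{D_t}+\int f_t(\Delta_{D_t}f_t+2f_t)\,dV_{D_t}$.
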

\begin{proof}
The derivation of $\ddot{A}$ is standard. Let $\dot{s}_t=f_ts_t$. Then
\eq{
\dot{A}(t)&=\int_{S^1}f_t\,dV_{D_t},\\
\ddot{A}(t)&=\int_{S^1}\dot{f}_t\,dV_{D_t}
+\int_{S^1}f_t(\Delta_{D_t}f_t+2f_t)\,dV_{D_t}.
}
Since $\dot{f}_t=\frac{\ddot{s}_t}{s_t}-\frac{\dot{s}_t^2}{s_t^2}$ and $f_0=v$, the claim follows from integration by parts. The second statement follows from \autoref{lem:planar-two-reflections}.
\end{proof}

\begin{lemma}\label{lem:two-coordinates-reflections}
Let $n\geq 2$ and suppose $K\subset\R^n$ is a $C^{\infty}_+$ unconditional convex body. Assume $u\in C^{\infty}(\Sn)$ is odd with respect to $R_i$ and $R_j$ for some $1\le i<j\le n$. Then
\eq{
n\int_{\Sn}u^2\,dV_K\le \int_{\Sn}\abs{\nabla u}_g^2\,dV_K.
}
The inequality is strict unless $u\equiv 0$.
\end{lemma}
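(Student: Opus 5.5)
We follow the fiberwise route outlined in the introduction, via the deformation $h_t=he^{tu}$ and the volume functional. Let $K_t$ be the Wulff shape of $h_t$; for $|t|$ small it is a $C^{\infty}_+$ convex body with support function exactly $h_t$. Its symmetry is inherited from $u$: $h_t$ is even with respect to every $R_k$, $k\ne i,j$, while $R_i$ and $R_j$ send $h_t$ to $h_{-t}$. The standard second variation formula in dimension $n$ (the $n$-dimensional analogue of \autoref{lem:area-second-variation}, with $f_t=u$ independent of $t$) gives
\begin{equation}
\left.\frac{d^2}{dt^2}\right|_{t=0}\operatorname{Vol}(K_t)=c_n\Big(n\int_{\Sn}u^2\,dV_K-\int_{\Sn}\abs{\nabla u}_g^2\,dV_K\Big),
\end{equation}
with $c_n>0$ a normalizing constant; the $\dot f$ term vanishes because $\ddot h_0=u^2h=\dot h_0^2/h$. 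It therefore suffices to prove $\frac{d^2}{dt^2}\big|_{t=0}\operatorname{Vol}(K_t)\le 0$, with equality only when $u\equiv0$.

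Let $E=\operatorname{span}\{E_i,E_j\}$. For $\xi\in E^{\perp}$ with $\xi$ in the interior of the projection of $K$, consider the planar section $D_{\xi,t}=K_t\cap(\xi+E)-\xi$ with support function $s_{\xi,t}$, so that $\operatorname{Vol}(K_t)=\int_{E^\perp}\operatorname{Area}(D_{\xi,t})\,d\xi$. The first point is the pointwise bound $\frac{d^2}{dt^2}\log s_{\xi,t}(a)\le0$. For this, write $s_{\xi,t}(a)=\inf\{h_t(y)-\langle y,\xi\rangle\}$, the infimum taken over unit normals $y$ whose projection onto $E$ is a positive multiple of $a$. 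This is a minimization of a function that is log-affine in $t$ for each fixed $y$, namely $h(y)e^{tu(y)}-\langle y,\xi\rangle$. The cleaner way to see concavity is through inclusion: since $h_{(t+s)/2}\le\sqrt{h_th_s}$ holds with equality, $K_{(t+s)/2}$ is the Wulff shape of the geometric mean. Hence $K_{(t+s)/2}$ contains the log-Minkowski combination of $K_t$ and $K_s$, and slicing gives $s_{\xi,(t+s)/2}\ge\sqrt{s_{\xi,t}s_{\xi,s}}$. This is $\log$-concavity in $t$, i.e. $\ddot s\le\dot s^2/s$ at $t=0$.

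Next we need the symmetry of $v=\dot s_{\xi,0}/s_{\xi,0}$. Since $R_i$ and $R_j$ preserve $\xi+E$, we have $R_iD_{\xi,t}=D_{\xi,-t}$, so $s_{\xi,t}\circ R_i=s_{\xi,-t}$. Differentiating at $t=0$ shows that $v$ is odd under both planar reflections. Moreover $D_{\xi,0}$ is unconditional in $E$, because $K$ is. At this point \autoref{lem:area-second-variation} applies, after checking the smoothness of $s_{\xi,t}$ in $t$ and $a$. This holds because $K_t$ is $C^\infty_+$, so its sections by affine planes through interior points are $C^\infty_+$ and vary smoothly. We conclude $\frac{d^2}{dt^2}\operatorname{Area}(D_{\xi,t})|_{t=0}\le0$ for every such $\xi$.

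Integrating over $\xi$ gives the inequality, and this is the main obstacle. Differentiation under the integral sign is delicate near the boundary of the projection, where the sections degenerate. I would handle it by working with the second difference quotient $\operatorname{Vol}(K_t)+\operatorname{Vol}(K_{-t})-2\operatorname{Vol}(K)$ and using the sectional inequality in its integrated form, $\operatorname{Area}(D_{\xi,t})+\operatorname{Area}(D_{\xi,-t})\le 2\operatorname{Area}(D_{\xi,0})+o(t^2)$ locally uniformly, together with Fatou's lemma or uniform bounds. A cleaner alternative is the second proof mentioned in the introduction. By log-concavity, $D_{\xi,0}\supseteq$ the log-Minkowski combination of $D_{\xi,t}$ and $D_{\xi,-t}=R_iD_{\xi,t}$, and the planar log-Brunn--Minkowski inequality for such symmetric pairs then gives $\operatorname{Area}(D_{\xi,0})\ge\operatorname{Area}(D_{\xi,t})$ for all small $t$ and all $\xi$. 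Integrating yields $\operatorname{Vol}(K)\ge\operatorname{Vol}(K_t)$, so $t=0$ is a maximum and the second derivative is nonpositive.

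For strictness, suppose equality holds. Then $\sum$ of the planar deficits vanishes, so for almost every $\xi$ the quantity in \autoref{lem:planar-two-reflections} vanishes, which forces $v_\xi\equiv0$. Hence $\dot s_{\xi,0}\equiv0$ for all $\xi$, which means every section is stationary to first order. Since the sections determine $\partial K_t$ near each point, the normal velocity $uh$ vanishes, and so $u\equiv0$.
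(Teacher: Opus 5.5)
Your architecture matches the paper's two proofs. The gap is in the estimate everything rests on: $\ddot s_{\xi,0}\le\dot s_{\xi,0}^2/s_{\xi,0}$, equivalently $s_{\xi,t}s_{\xi,-t}\le s_{\xi,0}^2$. Neither argument you give establishes it.

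\textbf{Your two justifications fail.} First, $t\mapsto h(y)e^{tu(y)}-\ip{y}{\xi}$ is not log-affine. With $c=\ip{y}{\xi}$ one has $(\log f)''=-c\,h(y)u(y)^2e^{tu(y)}/f^2$, which is nonpositive only when $c\ge0$. Second, slicing does not commute with $+_0$: the log-Minkowski combination depends on the origin, while the sections are recentered at $\xi$. Your reasoning uses nothing about $K$ beyond convexity, and it already gives a false conclusion for $u\equiv1$, i.e.\ $K_t=e^tK_0$. There $K_{(t+s)/2}$ is exactly the log-Minkowski combination of $K_t$ and $K_s$, yet $s_{\xi,t}(a)=e^t\sigma(e^{-t}\xi)$, where $\sigma(\eta)$ is the support function at $a$ of $K_0\cap(\eta+E)-\eta$. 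If these sections widen linearly along $\xi$ (e.g.\ a truncated cone with $\sigma(\lambda\xi)=1+\lambda$ for $\lambda$ near $1$), then $\log s_{\xi,t}(a)=t+\log(1+e^{-t})$ is strictly convex. If you try to prove the sectional inclusion from $s_{\xi,t}(a)\le h_t(a+b)-\ip{b}{\xi}$, you need $\sqrt{(P-c)(Q-c)}+c\le\sqrt{PQ}$ with $c=\ip{b}{\xi}$. This holds precisely when $c\ge0$.

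\textbf{The missing idea is the sign $\ip{b}{\xi}\ge0$.} This is where unconditionality of $K$ in the coordinates $k\notin\{i,j\}$ must enter; you use unconditionality only to say $D_{\xi,0}$ is unconditional. The paper proceeds as follows.
\begin{enumerate}
\item Take the minimizer $b$ in \eqref{eq:section-support} at $t=0$ and set $w=a+b$.
\item Since $R_kDh(w)\in K$, one gets $w_k(Dh(w))_k\ge0$. Hence $\ip{b}{\xi}=\sum_{k\notin\{i,j\}}w_k(Dh(w))_k\ge0$ and $h(w)\ge s_{\xi,0}(a)$.
\item The bound $s_{\xi,\pm t}(a)\le h(w)e^{\pm tu(w)}-\ip{b}{\xi}$, with equality at $t=0$, then gives both $\ddot s_{\xi,0}(a)\le h(w)u(w)^2\le\dot s_{\xi,0}(a)^2/s_{\xi,0}(a)$ and $s_{\xi,t}s_{\xi,-t}\le s_{\xi,0}^2$.
\end{enumerate}

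\textbf{This must be done at the midpoint $t=0$.} Your claim that $h_t$ is even under every $R_k$, $k\ne i,j$, is not a hypothesis of the lemma. It also fails for the components $F_a$ with $\abs{a}\ge3$ in the proof of \autoref{thm:main-poincare}. So $K_t$ need not be symmetric in the transverse coordinates, and log-concavity at midpoints $(t+s)/2\ne0$ is not available.

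\textbf{Two smaller points.}
\begin{itemize}
\item The planar log-Brunn--Minkowski inequality needs $D_{\xi,\pm t}$ to be origin-symmetric. This follows from $R_iR_jK_t=K_t$, but you should say so.
\item Your strictness argument presupposes $-\frac{d^2}{dt^2}\big|_{t=0}\operatorname{Vol}(K_t)\ge\int_{C}(-\ddot A_\xi(0))\,d\xi$ with $C=\mathcal{P}_FK$, which you only sketch. It does follow from Fatou's lemma applied to the nonnegative quotients $(2A_\xi(0)-A_\xi(t)-A_\xi(-t))/t^2$, once $A_\xi(t)\le A_\xi(0)$ is known for all $\xi$ and all small $\abs{t}$. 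The paper instead uses a cutoff near $F\cap\Sn$ in its first proof, and strict negativity at a point with $\ip{b}{\xi}>0$ in its second.
\end{itemize}
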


\begin{proof}[First proof of \autoref{lem:two-coordinates-reflections}]
For $n=2$, this is \autoref{lem:planar-two-reflections}, so we assume $n\geq 3$. As usual, we extend support functions from the unit sphere as homogeneous functions of degree one and functions such as $u$ as homogeneous functions of degree zero on $\R^n\setminus\{0\}$.

Let $E=\operatorname{span}\{E_i,E_j\}$ and $F=E^{\perp}$. For $\xi\in F$, set
\eq{
 A_{\xi}(t):=\operatorname{Area}\big(K_t\cap(\xi+E)\big).
}
We will show that under the deformation $h_t=he^{tu}$ of $K$, the support function $s_{\xi,t}$ of $(K_t\cap(\xi+E))-\xi$ satisfies
\eq{
 \ddot{s}_{\xi,0}\leq \frac{\dot{s}_{\xi,0}^2}{s_{\xi,0}}
 \quad\text{on }S^1=E\cap\Sn.
}
The symmetries of the section, together with \autoref{lem:planar-two-reflections} and \autoref{lem:area-second-variation}, imply that
\eq{
 \ddot{A}_{\xi}(0)\leq 0.
}
On the other hand, by Fubini's theorem
\eq{
 \operatorname{Vol}(K_t)=\int_{F} A_{\xi}(t)\,d\xi,
}
while
\eq{\label{eq:volume-second-variation}
& \left.\frac{d^2}{dt^2}\right|_{t=0}\operatorname{Vol}(K_t)
 &=\int_{\Sn}u(\Delta u+nu)\,dV_K=\int_{\Sn}\left(nu^2-\abs{\nabla u}_g^2\right)\,dV_K.
}
Hence, once differentiation under the integral sign is justified, the concavity of the sectional area functional yields that of $\operatorname{Vol}(K_t)$ and thus the desired inequality.

For sufficiently small $|t|$, $ h_t=he^{tu}$ is the support function of a $C^{\infty}_+$ convex body $K_t$, and
\eq{\label{eq:global-log-symmetries}
 R_iK_t=R_jK_t=K_{-t},\quad R_iR_jK_t=K_t.
}
Moreover, $u=0$ on $F\setminus\{0\}$. Hence $h_t|_{F}=h|_{F}$, and
\eq{
 C:=\mathcal{P}_{F}K_t=\mathcal{P}_{F}K
}
is independent of $t$. Here $\mathcal{P}_{F}$ denotes the orthogonal projection onto $F$.

Let $\xi\in\operatorname{int}(C)$ (relative interior of $C$ in $F$). Consider the two-dimensional sections
\eq{
 D_{\xi,t}=\big(K_t\cap(\xi+E)\big)-\xi\subset E.
}
Let $s_{\xi,t}$ denote the support function of $D_{\xi,t}$. Since $\xi\in\operatorname{int}(C)$, $\partial K_t$ and $\xi+E$ intersect transversely; hence $D_{\xi,t}$ is a $C^{\infty}_+$ convex body in $E$.

In view of \eqref{eq:global-log-symmetries},
\eq{\label{eq:section-symmetries}
 R_iD_{\xi,t}=R_jD_{\xi,t}=D_{\xi,-t},\quad -D_{\xi,t}=D_{\xi,t}.
}
In particular, $D_{\xi,0}$ is unconditional and contains the origin in its interior. By \eqref{eq:section-symmetries},
\eq{
 s_{\xi,-t}(a)=s_{\xi,t}(R_i a)=s_{\xi,t}(R_j a),\quad a\in E.
}
Consequently, $v_{\xi}:=\dot{s}_{\xi,0}/s_{\xi,0}$ is odd with respect to both coordinate reflections of $E$.

Note that for $a\in E\setminus\{0\}$:
\eq{\label{eq:section-support}
 s_{\xi,t}(a)=\min_{b\in F}\left\{h_t(a+b)+h_{\xi+E}(-b)\right\}=\min_{b\in F}\left\{h_t(a+b)-\ip{b}{\xi}\right\}.
}
Indeed, this follows from \cite[Thm. 5.6]{Roc70}; see also \cite[Sec. 1.7, Notes, item 3, p. 59]{Sch14} for intersections of convex bodies. We nevertheless include the proof, since we will use below that the minimizer $b$ is unique (due to smoothness and strict convexity) and satisfies
\eq{
 \mathcal{P}_{F}Dh_t(a+b)=\xi.
}

Let $y\in D_{\xi,t}$ and $b\in F$. Then $y+\xi\in K_t$ and
\eq{
 \ip{a}{y}=\ip{a+b}{y+\xi}-\ip{b}{\xi}\leq h_t(a+b)-\ip{b}{\xi}.
}
Taking the maximum over $y$ and then the infimum over $b$, we find
\eq{
 s_{\xi,t}(a)\leq
 \inf_{b\in F}\left\{h_t(a+b)-\ip{b}{\xi}\right\}.
}

For the reverse inequality, let $G_{\xi,t}:\partial D_{\xi,t}\to S^1$ be the Gauss map of the section and $\nu_t:\partial K_t\to\Sn$ be the Gauss map of $K_t$. Put
\eq{
 y=G_{\xi,t}^{-1}\left(\frac{a}{|a|}\right),\quad  b=\frac{|a|}{|\mathcal{P}_{E}\nu_t(y+\xi)|}\mathcal{P}_{F}\nu_t(y+\xi).
}
By the definition of the Gauss map of the section,
\eq{
 \frac{\mathcal{P}_{E}\nu_t(y+\xi)}{|\mathcal{P}_{E}\nu_t(y+\xi)|}=\frac{a}{|a|}.
}
Consequently,
\eq{
 a+b=\frac{|a|}{|\mathcal{P}_{E}\nu_t(y+\xi)|}\nu_t(y+\xi),
 \quad Dh_t(a+b)=y+\xi,
}
and
\eq{
 h_t(a+b)-\ip{b}{\xi}=\ip{a+b}{y+\xi}-\ip{b}{\xi}=\ip{a}{y}=s_{\xi,t}(a).
}
This proves \eqref{eq:section-support}. Also note that any minimizer $\tilde{b}$ must satisfy $h_t(a+\tilde{b})=\ip{a+\tilde{b}}{y+\xi}$. In particular, both $\frac{a+b}{|a+b|}$ and $\frac{a+\tilde{b}}{|a+\tilde{b}|}$ are outer unit normals to $K_t$ at $y+\xi$. Hence $b=\tilde{b}$. Moreover, from $Dh_t(a+b)=y+\xi$ it follows that $\mathcal{P}_{F}Dh_t(a+b)=\xi$.

\begin{figure}[H]
\begin{tikzpicture}[
  >=Stealth,
  line cap=round,
  line join=round,
  every node/.style={font=\footnotesize,text=black!78}
]
  \pgfmathsetmacro{\radiusE}{2.45}
  \pgfmathsetmacro{\radiusF}{1.95}
  \pgfmathsetmacro{\sectionheight}{0.66}
  \pgfmathsetmacro{\contactx}{
    \radiusE*sqrt(1-(\sectionheight/\radiusF)^2)}
  \pgfmathsetmacro{\vectorrun}{1.28}
  \pgfmathsetmacro{\vectorrise}{
    \vectorrun*\sectionheight*\radiusE^2/(\contactx*\radiusF^2)}
  \pgfmathsetmacro{\tangenthalfwidth}{0.32}
  \pgfmathsetmacro{\tangenthalfheight}{
    \tangenthalfwidth*\contactx*\radiusF^2/
    (\sectionheight*\radiusE^2)}
  \pgfmathsetmacro{\negativetangenthalfwidth}{-\tangenthalfwidth}
  \pgfmathsetmacro{\negativetangenthalfheight}{-\tangenthalfheight}

  \coordinate (p) at (\contactx,\sectionheight);
  \coordinate (q) at (-\contactx,\sectionheight);
  \coordinate (pa) at ($(p)+(\vectorrun,0)$);
  \coordinate (pab) at ($(pa)+(0,\vectorrise)$);

  \path[fill=FigurePaleFill,draw=BodySlate,line width=.82pt]
    (0,0) ellipse [x radius=\radiusE cm,y radius=\radiusF cm];

  \draw[->,FigureGraphite,line width=.65pt] (-3.20,-2.30)--(-3.20,2.70)
    node[above] {$F$};
  \fill[SectionPine] (-3.20,\sectionheight) circle (1.35pt);
  \node[anchor=east]
    at (-3.32,\sectionheight) {$\xi$};
  \draw[SectionPine!58,dash pattern=on 2pt off 2pt,line width=.4pt]
    (-3.20,\sectionheight)--(q);
  \draw[SectionPine,line width=.72pt] (q)--(p);
  \node[anchor=south]
    at ($(q)!.48!(p)+(0,.08)$) {$K_t\cap(\xi+E)$};

  \draw[FigureMidGray!65,line width=.42pt]
    ($(p)+(\negativetangenthalfwidth,\tangenthalfheight)$)--
    ($(p)+(\tangenthalfwidth,\negativetangenthalfheight)$);

  \fill[FigureInk] (p) circle (1.5pt);
  \draw[->,FigureGraphite,line width=.78pt] (p)--(pa)
    node[pos=.58,below=1pt] {$a\in E$};
  \draw[->,VectorBrick,line width=.82pt] (pa)--(pab)
    node[midway,right=3pt] {$b\in F$};
  \draw[->,FigureInk,line width=.98pt] (p)--(pab)
    node[pos=.64,above left=1pt] {$a+b$};

  \node[anchor=north east] (contactnote) at (1.72,0.18)
    {$Dh_t(a+b)=y+\xi$};
  \draw[FigureMidGray,->,dash pattern=on 1.5pt off 1.4pt,line width=.45pt]
    (contactnote.east)--($(p)+(-0.07,-0.05)$);
  \node[anchor=west] at (0.9,-1.3) {$K_t$};
\end{tikzpicture}
\end{figure}

We next justify the smooth dependence of the sections. Define
\eq{
 \Phi&:F\times\operatorname{int}(C)\times
 (E\setminus\{0\})\times(-t_0,t_0)\to F,\\
 \Phi(b,\xi,a,t)&=\mathcal{P}_{F}Dh_t(a+b)-\xi.
}
The unique minimizer $b=b(\xi,a,t)$ in \eqref{eq:section-support} satisfies
\eq{
 \Phi(b(\xi,a,t),\xi,a,t)=0.
}
For fixed $(b,\xi,a,t)$, the derivative of $\Phi$ with respect to $b$ is the linear map
\eq{
D_b\Phi(b,\xi,a,t)&:F\to F,\\
D_b\Phi(b,\xi,a,t)[q]&=\mathcal{P}_{F}\big(D^2h_t(a+b)q\big), \quad q\in F.
}
If $0\ne q\in F$, then
\eq{
 \ip{D_b\Phi(b,\xi,a,t)[q]}{q}=D^2h_t(a+b)[q,q]>0,
}
where we used $q\notin \ker(D^2h_t(a+b))=\R (a+b)$. The implicit function theorem and the uniqueness of the minimizer $b$ now show that $b=b(\xi,a,t)$ depends smoothly on $(\xi,a,t)$, and
\eq{
 s_{\xi,t}(a)=h_t\big(a+b(\xi,a,t)\big) -\ip{b(\xi,a,t)}{\xi}
}
depends smoothly on $(\xi,a,t)$ for $\xi\in\operatorname{int}(C)$ and $a\in E\setminus\{0\}$. It follows that $A_{\xi}(t)$ depends smoothly on $(\xi,t)$ for $\xi\in\operatorname{int}(C)$.

Let $b$ be the minimizer at $t=0$, and set $w=a+b$. Then
\eq{
 \mathcal{P}_{F}Dh(w)=\xi,\quad
 s_{\xi,0}(a)=h(w)-\ip{b}{\xi}.
}
For $k\notin\{i,j\}$, unconditionality implies $R_kDh(w)\in K$. Since the hyperplane with normal $w$ supports $K$ at $Dh(w)$,
\eq{
 0\leq\ip{w}{Dh(w)}-\ip{w}{R_kDh(w)}=2w_k(Dh(w))_k.
}
Since $b=\mathcal{P}_{F}w$ and $\xi=\mathcal{P}_{F}Dh(w)$, it follows that
\eq{
 \ip{b}{\xi}=\sum_{k\notin\{i,j\}}w_k(Dh(w))_k\geq 0, \quad 
 h(w)\geq s_{\xi,0}(a).
}

In view of \eqref{eq:section-support},
\eq{\label{eq:for-second-proof}
 s_{\xi,t}(a)\leq h(w)e^{tu(w)}-\ip{b}{\xi},
}
with equality at $t=0$. Consequently,
\eq{\label{eq:section-log-concavity}
 \dot{s}_{\xi,0}(a)&=h(w)u(w),\\
 \ddot{s}_{\xi,0}(a)&\leq h(w)u(w)^2
 \leq\frac{h(w)^2u(w)^2}{s_{\xi,0}(a)}
 =\frac{\dot{s}_{\xi,0}(a)^2}{s_{\xi,0}(a)}.
}

For $f\in C^{\infty}(S^1)$, set
\eq{
 \mathcal{E}_{\xi}(f):=
 \int_{S^1}\left(|\nabla f|_{g_{D_{\xi,0}}}^2-2f^2\right)
 \,dV_{D_{\xi,0}}.
}
By \autoref{lem:planar-two-reflections} and \autoref{lem:area-second-variation}, for $\xi\in\operatorname{int}(C)$, we have
\eq{\label{eq:sectional-energy-pointwise}
 -\ddot{A}_{\xi}(0)\geq\mathcal{E}_{\xi}(v_{\xi})\geq 0.
}

By Fubini's theorem, $ \operatorname{Vol}(K_t)=\int_C A_{\xi}(t)\,d\xi$. We next justify
\eq{\label{eq:volume-concavity}
 \left.\frac{d^2}{dt^2}\right|_{t=0}\operatorname{Vol}(K_t)
 =\int_C \ddot{A}_{\xi}(0)\,d\xi.
}

We first note that, for all small $|t|$,
\eq{
x\in \Sn,\quad \mathcal{P}_{F}Dh_t(x)\in\partial C  \implies x\in F\cap\Sn.
}
Indeed, let $\eta\in F\cap\Sn$ be an outer normal to $C=\mathcal{P}_{F}K_t$ at $\mathcal{P}_{F}Dh_t(x)$. Then
\eq{
 \ip{\eta}{Dh_t(x)}
 =\ip{\eta}{\mathcal{P}_{F}Dh_t(x)}
 =h_C(\eta)=\max_{y\in K_t}\langle\eta,\mathcal{P}_Fy\rangle=\max_{y\in K_t}\langle\eta,y\rangle=h_{K_t}(\eta).
}
Thus $\eta$ is an outer unit normal to $K_t$ at $Dh_t(x)$. By definition, $x$ is the outer unit normal at $Dh_t(x)$. Thus $x=\eta$.

\begin{figure}[H]
\begin{tikzpicture}[
  >=Stealth,
  line cap=round,
  line join=round,
  every node/.style={font=\footnotesize,text=black!78}
]
  \definecolor{FigureBlue}{HTML}{355C6D}
  \definecolor{FigureRed}{HTML}{B9654B}
  \definecolor{FigureGreen}{HTML}{4F7158}

  \pgfmathsetmacro{\bodyradius}{1.90}
  \pgfmathsetmacro{\bodyheight}{2.68}
  \pgfmathsetmacro{\stationaryheight}{1.45}
  \pgfmathsetmacro{\negativestationaryheight}{-\stationaryheight}
  \pgfmathsetmacro{\negativebodyheight}{-\bodyheight}
  \pgfmathsetmacro{\fixedheight}{1.95}
  \pgfmathsetmacro{\negativefixedheight}{-\fixedheight}
  \pgfmathsetmacro{\fixedradius}{
    \bodyradius*sqrt(1-(\fixedheight/\bodyheight)^2)}

  \path[fill=FigureBlue!5]
    (0,0) ellipse [x radius=\bodyradius cm,y radius=\bodyheight cm];

  \path[fill=FigureGreen!7,draw=FigureGreen!36,line width=.45pt]
    (-1.88,1.73)--(1.28,1.73)--(1.88,2.17)--(-1.28,2.17)--cycle;
  \path[fill=FigureGreen!7,draw=FigureGreen!36,line width=.45pt]
    (-1.88,-2.17)--(1.28,-2.17)--(1.88,-1.73)--(-1.28,-1.73)--cycle;
  \draw[FigureGreen,line width=.8pt,fill=FigureGreen!8]
    (0,\fixedheight) ellipse [x radius=\fixedradius cm,y radius=.29cm];
  \draw[FigureGreen,line width=.8pt,fill=FigureGreen!8]
    (0,\negativefixedheight)
    ellipse [x radius=\fixedradius cm,y radius=.29cm];

  \draw[FigureBlue,line width=.72pt]
    (0,0) ellipse [x radius=\bodyradius cm,y radius=.43cm];
  \draw[FigureRed,dash pattern=on 3.5pt off 2.1pt,line width=.82pt]
    plot[domain=0:360,samples=100,variable=\theta]
    ({\bodyradius*(1+.11*sin(2*\theta))*cos(\theta)},
     {.43*(1+.11*sin(2*\theta))*sin(\theta)}) -- cycle;

  \draw[FigureBlue,line width=.78pt]
    (0,0) ellipse [x radius=\bodyradius cm,y radius=\bodyheight cm];

  \draw[FigureGreen,line width=1.05pt]
    plot[domain=\stationaryheight:\bodyheight,samples=45,variable=\z]
    ({\bodyradius*sqrt(1-(\z/\bodyheight)^2)},{\z});
  \draw[FigureGreen,line width=1.05pt]
    plot[domain=\stationaryheight:\bodyheight,samples=45,variable=\z]
    ({-\bodyradius*sqrt(1-(\z/\bodyheight)^2)},{\z});
  \draw[FigureGreen,line width=1.05pt]
    plot[domain=\negativebodyheight:\negativestationaryheight,
      samples=45,variable=\z]
    ({\bodyradius*sqrt(1-(\z/\bodyheight)^2)},{\z});
  \draw[FigureGreen,line width=1.05pt]
    plot[domain=\negativebodyheight:\negativestationaryheight,
      samples=45,variable=\z]
    ({-\bodyradius*sqrt(1-(\z/\bodyheight)^2)},{\z});

  \draw[->,black!72,line width=.65pt] (-3.25,-3.12)--(-3.25,3.30)
    node[above] {$F$};
  \draw[FigureRed,dash pattern=on 3.5pt off 2.1pt,line width=1.05pt]
    (-3.25,-\stationaryheight)--(-3.25,\stationaryheight);
  \draw[FigureGreen,line width=1.35pt]
    (-3.25,\stationaryheight)--(-3.25,\bodyheight);
  \draw[FigureGreen,line width=1.35pt]
    (-3.25,\negativebodyheight)--(-3.25,\negativestationaryheight);
  \fill[FigureGreen] (-3.25,1.95) circle (1.35pt);
  \fill[FigureGreen] (-3.25,-1.95) circle (1.35pt);
  \fill[FigureRed] (-3.25,0) circle (1.35pt);
  \node[anchor=east] at (-3.39,.72) {$C$};
  \node[anchor=east] at (-3.39,1.95) {$\eta$};
  \node[anchor=east] at (-3.39,-1.95) {$-\eta$};
  \node[anchor=east] at (-3.39,0) {$\xi$};
  \draw[FigureGreen!45,dash pattern=on 2pt off 2pt,line width=.4pt]
    (-3.25,1.95)--(-1.15,1.95);
  \draw[FigureGreen!45,dash pattern=on 2pt off 2pt,line width=.4pt]
    (-3.25,-1.95)--(-1.15,-1.95);
  \draw[FigureRed!55,dash pattern=on 2pt off 2pt,line width=.4pt]
    (-3.25,0)--(-1.90,0);

  \node[anchor=west,align=left] (fixednote) at (1.8,2.42)
    {$\operatorname{dist}(\eta,\partial C)<\delta$\\[-1pt]
     $D_{\eta,t}=D_{\eta,0}$};
  \draw[FigureGreen,->,line width=.48pt]
    (fixednote.south west)--(1.18,2.05);

  \draw[black!38,->,line width=.48pt]
    (1.94,0)--(2.60,0);
  \begin{scope}[shift={(4.08,0)}]
    \draw[black!22,line width=.35pt] (-1.46,0)--(1.46,0);
    \draw[black!22,line width=.35pt] (0,-.93)--(0,.93);
    \draw[FigureBlue,line width=.82pt]
      (0,0) ellipse [x radius=1.25cm,y radius=.80cm];
    \draw[FigureRed,dash pattern=on 3.5pt off 2.1pt,line width=.9pt]
      plot[domain=0:360,samples=100,variable=\theta]
      ({1.25*(1+.14*sin(2*\theta))*cos(\theta)},
       {.80*(1+.14*sin(2*\theta))*sin(\theta)}) -- cycle;

    \draw[FigureRed,->,line width=.58pt] (.77,.49)--(.99,.63);
    \draw[FigureRed,->,line width=.58pt] (-.77,-.49)--(-.99,-.63);
    \draw[FigureRed,->,line width=.58pt] (-.94,.60)--(-.73,.47);
    \draw[FigureRed,->,line width=.58pt] (.94,-.60)--(.73,-.47);

    \node[text=FigureBlue,anchor=south] at (0,.91) {$D_{\xi,0}$};
    \node[text=FigureRed,anchor=north] at (0,-.91) {$D_{\xi,t}$};
  \end{scope}
\end{tikzpicture}
\end{figure}

Let us now impose an additional assumption that $u$ vanishes  in an open neighborhood $U\subset \Sn$ of $F\cap\Sn$. Geometrically, the deformation $h_t=he^{tu}$ is stationary for normal directions close to $F$. Consequently, the part of $\partial K$ whose outer normals lie near $F\cap\Sn$ does not move. This is used to keep the planar sections fixed near the boundary of $\mathcal{P}_{F}K$ so that \eqref{eq:volume-concavity} is justified. We now make this precise.

The compact set
\eq{
 \left\{\mathcal{P}_{F}Dh_t(x):|t|\leq t_0,\ x\in\Sn\setminus U\right\}
}
has positive distance from $\partial C $. Choose $\delta>0$ smaller than this distance. Then
\eq{
 \partial K_t\cap
 \left\{y\in\R^n:\operatorname{dist}(\mathcal{P}_{F}y,\partial C)<\delta\right\}
 =\partial K\cap\left\{y\in\R^n:\operatorname{dist}(\mathcal{P}_{F}y,\partial C)<\delta\right\}.
}
Indeed, write $y=Dh_t(x)\in\partial K_t$. If $\operatorname{dist}(\mathcal{P}_Fy,\partial C)<\delta$, the choice of $\delta$ implies that $x\in U$. Since $u=0$ on $U$, we have $Dh_t(x)=Dh(x)$, and hence $y\in\partial K$. The reverse inclusion follows similarly.

We have proved
\eq{
 D_{\xi,t}=D_{\xi,0},\quad A_{\xi}(t)=A_{\xi}(0),\quad \operatorname{dist}(\xi,\partial C)<\delta.
}
Let $C_{\delta}=\{\xi\in C:\,\operatorname{dist}(\xi,\partial C)\geq\delta\}$. Then
\eq{
 \sup_{\xi\in C_{\delta},\ |t|\leq t_0}
 \left|\partial_tA_{\xi}(t)\right|+ \sup_{\xi\in C_{\delta},\ |t|\leq t_0}
 \left|\partial_t^2A_{\xi}(t)\right|<\infty,
}
and
\eq{
 \operatorname{Vol}(K_t)-\operatorname{Vol}(K)
 =\int_{C_{\delta}}\big(A_{\xi}(t)-A_{\xi}(0)\big)\,d\xi.
}
Differentiating this twice and combining \eqref{eq:volume-second-variation}, \eqref{eq:sectional-energy-pointwise}, and \eqref{eq:volume-concavity}, we obtain
\eq{\label{eq:sectional-energy-integrated}
 \int_{\Sn}\left(|\nabla u|_g^2-nu^2\right)dV_K
 =-\int_C \ddot{A}_{\xi}(0)\,d\xi
 \geq\int_{\operatorname{int}(C)}\mathcal{E}_{\xi}(v_{\xi})\,d\xi.
}
This proves the inequality when $u$ vanishes near $F\cap\Sn$. Next we remove this assumption. 

Let $\chi\in C^{\infty}(\R)$ be such that $0\leq\chi\leq1$, $\chi=0$ on $(-\infty,1]$, and $\chi=1$ on $[4,\infty)$. Define
\eq{
 \chi_{\varepsilon}(x) =\chi\left(\frac{|\mathcal{P}_{E}x|^2}{\varepsilon^2}\right),
 \quad u_{\varepsilon}=\chi_{\varepsilon} u,\quad x\in \Sn.
}

In view of \autoref{lem:double-odd-division}, near $F\cap\Sn$, we have
\eq{
 u=O(|\mathcal{P}_{E}x|^2),\quad \bar{\nabla} u=O(|\mathcal{P}_{E}x|).
}
Note that 
\eq{
 \operatorname{supp}(u_{\varepsilon}-u)
 \subset\{x\in\Sn:|\mathcal{P}_Ex|\leq 2\varepsilon\},
}
and hence
\eq{
 \|u_{\varepsilon}-u\|_{C^0(\Sn)}\leq C\varepsilon^2.
}
Moreover, we have
\eq{
 \bar{\nabla}(u_{\varepsilon}-u)
 = (\chi_{\varepsilon}-1)\bar{\nabla}u+u\bar{\nabla}\chi_{\varepsilon}.
}
On the support of $\bar{\nabla}\chi_{\varepsilon}$, we have $\varepsilon\leq|\mathcal{P}_{E}x|\leq2\varepsilon$ and $|\bar{\nabla}\chi_{\varepsilon}|\leq C/\varepsilon$. Hence both terms on the right-hand side are $O(\varepsilon)$, and
\eq{
 \|u_{\varepsilon}-u\|_{C^1(\Sn)}\leq C\varepsilon\to 0.
}
Moreover, $u_{\varepsilon}$ is odd with respect to the same coordinate reflections as $u$, and it vanishes when $|\mathcal{P}_{E}x|\leq\varepsilon$. Therefore, we may apply the previous argument to $u_{\varepsilon}$.

Let $v_{\xi,\varepsilon}$ and $v_{\xi}$ be the section speeds associated with $u_{\varepsilon}$ and $u$, respectively. For $\xi\in\operatorname{int}(C)$ and $a\in S^1$, let $b_{\xi}(a)$ be the minimizer in \eqref{eq:section-support} at $t=0$, and write $w_{\xi}(a)=a+b_{\xi}(a)$. Then
\eq{
 \mathcal{P}_{F}Dh(a+b_{\xi}(a))=\xi.
}
By the discussion above, $w_{\xi}(a)$ depends smoothly on $(\xi,a)$. Note that this map depends only on $K$, not on the perturbation. The first identity in \eqref{eq:section-log-concavity} is local at $t=0$ and does not use the assumption that $u$ vanishes near $F\cap \Sn$. Therefore,
\eq{
 \dot{s}^{(\varepsilon)}_{\xi,0}(a)
 &=h(w_{\xi}(a))u_{\varepsilon}(w_{\xi}(a)),\\
 \dot{s}_{\xi,0}(a)&=h(w_{\xi}(a))u(w_{\xi}(a)).
}
It follows from the smooth dependence on $a$ and
$u_{\varepsilon}\to u$ in $C^1(\Sn)$ that
\eq{
 v_{\xi,\varepsilon}\to v_{\xi}
 \quad\text{in }C^1(S^1).
}
Applying \eqref{eq:sectional-energy-integrated} to $u_{\varepsilon}$, using $u_{\varepsilon}\to u$ in $C^1(\Sn)$, $\mathcal{E}_{\xi}(v_{\xi,\varepsilon})\geq 0$, and Fatou's lemma, we obtain
\eq{\label{eq:sum-sectional-energy}
 \int_{\Sn}\left(|\nabla u|_g^2-nu^2\right)dV_K
 &\geq\int_{\operatorname{int}(C)}
 \mathcal{E}_{\xi}(v_{\xi})\,d\xi\geq 0.
}

Finally, suppose that $u\not\equiv 0$, and choose $x\in\Sn$ with $u(x)\ne 0$. Then $a=\mathcal{P}_{E}x\ne 0$, and the implication proved above at $t=0$ shows that $\xi:=\mathcal{P}_{F}Dh(x)$ belongs to $\operatorname{int}(C)$. For this $a$ and $\xi$, the minimizer for \eqref{eq:section-support} at $t=0$ is $b=\mathcal{P}_{F}x$, and hence $w=x$. Therefore,
\eq{
 \dot{s}_{\xi,0}(a)=h(x)u(x)\ne 0.
}
Thus $v_{\xi}\not\equiv 0$ and $\mathcal{E}_{\xi}(v_{\xi})>0$. The smooth dependence on $(\xi,a)$ established above shows that the sections and their speeds vary smoothly with $\xi$. Hence $\mathcal{E}_{\xi}(v_{\xi})$ is positive on a neighborhood of this $\xi$. The right-hand side of \eqref{eq:sum-sectional-energy} is therefore positive.
\end{proof}

\begin{proof}[Second proof of \autoref{lem:two-coordinates-reflections}]
Fix $\xi\in\operatorname{int}(C)$, $a\in S^1$, and let $b\in F$ be the minimizer in \eqref{eq:section-support} at $t=0$, and put $w=a+b$. Applying \eqref{eq:for-second-proof} at $t$ and $-t$, we have
\eq{
 s_{\xi,t}(a)&\leq h(w)e^{tu(w)}-\ip{b}{\xi},\\
 s_{\xi,-t}(a)&\leq h(w)e^{-tu(w)}-\ip{b}{\xi},
}
and equality holds at $t=0$. Using $\ip{b}{\xi}\geq 0$, we obtain
\eq{\label{eq:sectional-midpoint}
 s_{\xi,t}(a)s_{\xi,-t}(a)
 &\leq
 \big(h(w)e^{tu(w)}-\ip{b}{\xi}\big)
 \big(h(w)e^{-tu(w)}-\ip{b}{\xi}\big)\\
 &=s_{\xi,0}(a)^2-2h(w)\ip{b}{\xi}\big(\cosh(tu(w))-1\big)\\
 &\leq s_{\xi,0}(a)^2.
}

Recall that $D_{\xi,t}$ and $D_{\xi,-t}$ are origin-symmetric. Let
\eq{
\widetilde{D}_{\xi,t}&=\frac{1}{2}\cdot D_{\xi,t}+_0\frac{1}{2}\cdot D_{\xi,-t}\\
 &:=\left\{y\in E:\ip{a}{y}\leq \sqrt{s_{\xi,t}(a)s_{\xi,-t}(a)} \text{ for every }a\in S^1\right\}.
}
It follows from \eqref{eq:sectional-midpoint} that
\eq{
\widetilde{D}_{\xi,t}\subset D_{\xi,0}.
}

Now the log-Brunn--Minkowski inequality \cite[Thm. 1.3]{BLYZ12} implies that
\eq{
A_{\xi}(0)\geq\operatorname{Area}(\widetilde{D}_{\xi,t})\geq\sqrt{A_{\xi}(t)A_{\xi}(-t)}=A_{\xi}(t),
}
where the last equality follows from $D_{\xi,-t}=R_iD_{\xi,t}$. Hence, by Fubini's theorem,
\eq{
 \operatorname{Vol}(K_t)=\int_C A_{\xi}(t)\,d\xi\leq\int_C A_{\xi}(0)\,d\xi=\operatorname{Vol}(K).
}
That is, $t=0$ is a local maximum of $t\mapsto\operatorname{Vol}(K_t)$. Using \eqref{eq:volume-second-variation}, we conclude that
\eq{
\int_{\Sn}\left(nu^2-|\nabla u|_g^2\right)dV_K=\left.\frac{d^2}{dt^2}\right|_{t=0}\operatorname{Vol}(K_t)\leq 0.
}

It remains to prove strictness of the inequality if $u\not\equiv 0$. Put $\tilde{s}_{\xi,t}=\sqrt{s_{\xi,t}s_{\xi,-t}}$. Note that, for sufficiently small $|t|$, $\tilde{s}_{\xi,t}$ is the support function of $\widetilde{D}_{\xi,t}$. Since $\tilde{s}_{\xi,t}$ is even in $t$, we have $\dot{\tilde{s}}_{\xi,0}=0$. Moreover, $\tilde{s}_{\xi,t}\leq s_{\xi,0}$ and $\tilde{s}_{\xi,0}=s_{\xi,0}$; therefore $\ddot{\tilde{s}}_{\xi,0}\leq 0$ on $S^1$.

Now suppose that $u\not\equiv 0$. Since $n\geq 3$, we may choose $x\in\Sn$ such
that
\eq{
 u(x)\neq 0,\quad \mathcal{P}_Fx\neq 0.
}
Set $a=\mathcal{P}_Ex,\ b=\mathcal{P}_Fx,\ \xi=\mathcal{P}_FDh(x)$. Then $b$ is the minimizer in \eqref{eq:section-support} corresponding to $(\xi,a)$ at $t=0$. Due to \cite[Lem. 2.3]{HI26},
\eq{
 \ip{b}{\xi}
 =\sum_{k\notin\{i,j\}}x_kX_k(x)>0,
}
and in view of \eqref{eq:sectional-midpoint} we have
\eq{
 \tilde{s}_{\xi,0}(a)\ddot{\tilde{s}}_{\xi,0}(a)
 \leq-h(x)\ip{b}{\xi}u(x)^2<0.
}
Hence, using \autoref{lem:area-second-variation}, we obtain
\eq{
 \left.\frac{d^2}{dt^2}\right|_{t=0}
 \operatorname{Area}(\widetilde{D}_{\xi,t})
 =\int_{S^1}\ddot{\tilde{s}}_{\xi,0}\,dS_{D_{\xi,0}}<0.
}

Since $A_{\xi}(t)\leq \operatorname{Area}(\widetilde{D}_{\xi,t})$ and equality holds at $t=0$, it follows that
\eq{
 \ddot{A}_{\xi}(0)
 \leq
 \left.\frac{d^2}{dt^2}\right|_{t=0}
 \operatorname{Area}(\widetilde{D}_{\xi,t})<0.
}
The same estimate holds in a neighborhood of $\xi$. Together with $A_{\eta}(t)\leq A_{\eta}(0)$ for every $\eta\in C$, this yields
\eq{
 \left.\frac{d^2}{dt^2}\right|_{t=0}\operatorname{Vol}(K_t)<0.
}
\end{proof}

\begin{theorem}\label{thm:log-arbitrary}
Let $u\in C^1(\Sn)$. Then \eq{\label{eq:log-arbitrary}
 n\int_{\Omega_+}(u-\bar{u}_+)^2\,dV_K
 \leq \int_{\Omega_+}|\nabla u|_g^2\,dV_K,
 \quad
\bar{u}_+:=\frac{\int_{\Omega_+}u\,dV_K}
 {\int_{\Omega_+}dV_K}.
}
Equality holds only when $u$ is constant on $\Omega_+$.
\end{theorem}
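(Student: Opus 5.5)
The plan is to reduce \autoref{thm:log-arbitrary} to the unconditional case of the centro-affine Poincar\'e inequality, \cite[Thm. 8.12]{HI26}, by reflecting $u$ across the coordinate walls.

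\emph{Step 1: reflection.} Given $u\in C^1(\Sn)$, define $\tilde u$ on $\Sn$ by $\tilde u(x)=u(|x_1|,\ldots,|x_n|)$. This is the even extension of $u|_{\Omega_+}$ to every orthant. Then $\tilde u$ is unconditional and Lipschitz, hence lies in $H^1(\Sn)$, and it is $C^1$ on each open orthant. Since $K$ is unconditional, $h$, $\tau$, $g$ and $dV_K$ are invariant under every $R_k$. Up to the null set of coordinate walls, $\Sn$ is the union of the $2^n$ reflected copies of $\Omega_+$, so
\begin{equation}
\int_{\Sn}\tilde u\,dV_K=2^n\int_{\Omega_+}u\,dV_K,\qquad
\int_{\Sn}\Bigl(|\nabla\tilde u|_g^2-n(\tilde u-\bar u_+)^2\Bigr)dV_K=2^n\int_{\Omega_+}\Bigl(|\nabla u|_g^2-n(u-\bar u_+)^2\Bigr)dV_K .
\end{equation}
Put $F=\tilde u-\bar u_+$. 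Then $\int_{\Sn}F\,dV_K=0$. Moreover, $F$ is even under every $R_k$ while $l_i=x_i/h$ is odd under $R_i$, so $\int_{\Sn}F\,l_i\,dV_K=0$ automatically. Hence $F$ satisfies the orthogonality conditions of \autoref{thm:main-poincare}.

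\emph{Step 2: apply the unconditional result.} \cite[Thm. 8.12]{HI26} gives $n\int_{\Sn}F^2\,dV_K\le\int_{\Sn}|\nabla F|_g^2\,dV_K$ for smooth unconditional $F$ satisfying these conditions. To reach our $H^1$ function $F$, mollify $\tilde u$ by averaging over rotations and then symmetrize over the group generated by the $R_k$; this gives smooth unconditional functions converging to $\tilde u$ in $H^1$. Subtracting the $dV_K$-means preserves the orthogonality conditions, so the inequality passes to the limit. Dividing by $2^n$ and using Step 1 yields \eqref{eq:log-arbitrary}. Alternatively, if the flat logarithmic formulation on $(0,\infty)^n$ in \cite[Sec. 8]{HI26} is already stated for $H^1$ or $C^1$ functions without boundary conditions, one can apply it to $u|_{\Omega_+}$ directly.

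\emph{Step 3: equality case (main obstacle).} Strict inequality is not preserved under limits, so the equality case needs a separate argument. Suppose equality holds in \eqref{eq:log-arbitrary}. Then $F$ attains the value $n$ of the Rayleigh quotient $\int|\nabla\phi|_g^2\,dV_K\big/\int\phi^2\,dV_K$ over the closed subspace of unconditional $H^1$ functions with $dV_K$-mean zero, and by Step 2 this value is the infimum over that subspace. Because the subspace is invariant under the symmetrization, the first variation shows that $F$ is a weak solution of $\Delta F+nF=0$ on all of $\Sn$. Here the weak formulation against arbitrary test functions follows from testing against their unconditional symmetrizations. Elliptic regularity then makes $F$ smooth. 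Now the smooth equality characterization in \cite[Thm. 8.12]{HI26}, or \autoref{thm:main-poincare} for unconditional functions, forces $F\equiv0$, that is, $u\equiv\bar u_+$ on $\Omega_+$. The delicate point is checking that the variational characterization really yields the eigen-equation across the coordinate walls. The reflection symmetry of $g$ and $dV_K$ supplies the natural Neumann condition there, so no boundary term appears.
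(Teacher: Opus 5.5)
Your argument is correct, but it runs in the opposite direction from the paper.

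\textbf{The paper's route.} The paper reads \autoref{thm:log-arbitrary} directly off the proof of \cite[Thm. 8.12]{HI26}. There the orthant inequality (8.6) is proved in the flat logarithmic centro-affine geometry on $(0,\infty)^n$, where no condition on the coordinate walls is needed. Unconditionality of $u$ enters only in the passage from $\Omega_+$ to the whole sphere in (8.7). So the inequality and its equality case for arbitrary $u$ come out of that proof directly; this is your ``alternatively'' remark.

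\textbf{Your route.} You use only the final statement of \cite[Thm. 8.12]{HI26} for smooth unconditional functions as a black box, and recover the orthant version by even reflection. Each step checks out:
\begin{itemize}
\item Since $g$ and $dV_K$ are invariant under every $R_k$, the reflected function $\tilde u$ is Lipschitz and unconditional, and both the mean and the energy split into $2^n$ equal orthant contributions.
\item Mollifying and then averaging over the reflection group gives density of smooth unconditional functions in the unconditional part of $H^1$. This yields the inequality for every unconditional mean-zero $H^1$ function, not only for $\tilde u-\bar u_+$, which is what you need to call $n$ the infimum on that subspace in Step 3.
\item For the equality case, testing the first variation against the reflection average of an arbitrary $\psi\in H^1(\Sn)$ uses invariance of the Dirichlet form. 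It shows that $F$ is a weak solution of $\Delta F+nF=0$ on all of $\Sn$, hence smooth, so the smooth equality characterization gives $F\equiv 0$.
\end{itemize}

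\textbf{Comparison.} Your approach does not depend on the internal structure of the proof in \cite{HI26}. It also shows that the orthant inequality for arbitrary $u$ is equivalent to the whole-sphere inequality for unconditional $u$, by the usual fundamental-domain principle. The cost is the extra $H^1$ density and elliptic-regularity steps, which the paper's one-line citation avoids by noting that (8.6) was never restricted to unconditional $u$.
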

\begin{proof}
This is part of \cite[Thm. 8.12]{HI26}; the unconditionality assumption on $u$ was used there only for the second inequality (8.7).
\end{proof}

\begin{corollary}\label{cor:one-odd-orthogonal}
Let $i\in\{1,\ldots,n\}$. Suppose $u\in C^{\infty}(\Sn)$ is odd with respect to $R_i$, even with respect to every $R_j$, $j\ne i$, and
\eq{\label{eq:li-orthogonal}
 \int_{\Sn}ul_i\,dV_K=0.
}
Then
\eq{
 n\int_{\Sn}u^2\,dV_K
 \leq\int_{\Sn}|\nabla u|_g^2\,dV_K.
}
The inequality is strict unless $u\equiv 0$.
\end{corollary}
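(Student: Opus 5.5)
The plan is to reduce to the positive orthant $\Omega_+$ and apply \autoref{thm:log-arbitrary} not to $u$ itself but to the corrected function
\[
v=u+s\,l_i .
\]
Here the constant $s$ is chosen so that $\bar v_+=0$, i.e. $\int_{\Omega_+}v\,dV_K=0$. This is possible because $l_i=x_i/h>0$ on $\Omega_+$, so $s=-\int_{\Omega_+}u\,dV_K\big/\int_{\Omega_+}l_i\,dV_K$. Since $v\in C^\infty(\Sn)$, \autoref{thm:log-arbitrary} gives
\[
n\int_{\Omega_+}v^2\,dV_K\le\int_{\Omega_+}|\nabla v|_g^2\,dV_K .
\]
Three observations will be used throughout. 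First, $K$ is unconditional, so $dV_K$ and $g$ are invariant under every $R_k$. Second, $l_i$ has the same parity as $u$: odd in $x_i$ and even in every $x_j$, $j\neq i$. Third, the products $u^2$, $ul_i$, $l_i^2$, $|\nabla u|_g^2$, $g(\nabla u,\nabla l_i)$ and $|\nabla l_i|_g^2$ are therefore invariant under all coordinate reflections. Hence each of their integrals over $\Sn$ equals $2^n$ times the integral over $\Omega_+$, since the coordinate walls have measure zero.

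Next I expand both sides of the inequality for $v$.

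\emph{Cross term in $v^2$.} By \eqref{eq:li-orthogonal} and the symmetry just described, $\int_{\Omega_+}ul_i\,dV_K=2^{-n}\int_{\Sn}ul_i\,dV_K=0$.

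\emph{Cross term in the gradient.} Integrating by parts on the whole sphere and using \eqref{eq:conormal-identity},
\[
\int_{\Sn}g(\nabla u,\nabla l_i)\,dV_K=-\int_{\Sn}u\,\Delta l_i\,dV_K=(n-1)\int_{\Sn}ul_i\,dV_K=0 .
\]
By symmetry this integral therefore also vanishes over $\Omega_+$.

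\emph{The $s^2$ gradient term.} Again by symmetry and integration by parts on $\Sn$,
\[
\int_{\Omega_+}|\nabla l_i|_g^2\,dV_K=2^{-n}(n-1)\int_{\Sn}l_i^2\,dV_K=(n-1)\int_{\Omega_+}l_i^2\,dV_K .
\]
Working globally on $\Sn$ in this way avoids having to check boundary terms on the walls of $\Omega_+$.

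Substituting these three facts, the inequality for $v$ becomes
\[
n\int_{\Omega_+}u^2\,dV_K+n s^2\int_{\Omega_+}l_i^2\,dV_K\le\int_{\Omega_+}|\nabla u|_g^2\,dV_K+(n-1)s^2\int_{\Omega_+}l_i^2\,dV_K .
\]
Rearranging gives
\[
n\int_{\Omega_+}u^2\,dV_K+s^2\int_{\Omega_+}l_i^2\,dV_K\le\int_{\Omega_+}|\nabla u|_g^2\,dV_K .
\]
The $s^2$ term is nonnegative, so dropping it yields the inequality on $\Omega_+$, and multiplying by $2^n$ gives the claimed inequality on $\Sn$.

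For the equality case, suppose equality holds on $\Sn$. Then equality holds in the last display on $\Omega_+$, which forces $s=0$ and equality in \autoref{thm:log-arbitrary} for $v=u$. By the equality statement of that theorem, $u$ is constant on $\Omega_+$, and this constant is $\bar v_+=0$. By the reflection symmetries $u$ then vanishes on every open orthant, hence on $\Sn$ by continuity. (Alternatively: $u$ is odd under $R_i$, so it vanishes on $\{x_i=0\}$, and continuity from $\Omega_+$ again forces the constant to be $0$.)

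The main point requiring care is the choice of the correction $s\,l_i$, which removes the nonzero orthant mean. The computation only works because $l_i$ is simultaneously orthogonal to $u$ and an eigenfunction with eigenvalue $n-1<n$, and both properties are used in the same step. Everything else is symmetry bookkeeping, together with checking that \autoref{thm:log-arbitrary} applies to the smooth function $v$ on $\Sn$, which it does since that theorem is stated for arbitrary $u\in C^1(\Sn)$.
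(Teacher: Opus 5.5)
Your proof is correct and is essentially the paper's argument. The paper also adds $t\,l_i$, with $t$ chosen to kill the $\Omega_+$-mean, and applies \autoref{thm:log-arbitrary}. It then uses $\int_{\Sn}g(\nabla u,\nabla l_i)\,dV_K=(n-1)\int_{\Sn}ul_i\,dV_K=0$ and $\int_{\Sn}|\nabla l_i|_g^2\,dV_K=(n-1)\int_{\Sn}l_i^2\,dV_K$ to obtain $Q(u)=Q(w)+t^2\int_{\Sn}l_i^2\,dV_K$. The only cosmetic difference is where the expansion happens: the paper first lifts the inequality for $w$ to all of $\Sn$, using that $w^2$ and $|\nabla w|_g^2$ are unconditional, and expands there. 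You expand on $\Omega_+$ and multiply by $2^n$ at the end.
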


\begin{proof}
Let
\eq{
 t=-\frac{\int_{\Omega_+}u\,dV_K}{\int_{\Omega_+}l_i\,dV_K},\quad w=u+tl_i.
}
Then $\int_{\Omega_+}w\,dV_K=0$. Applying \autoref{thm:log-arbitrary} to $w$ yields
\eq{
 n\int_{\Omega_+}w^2\,dV_K
 \leq\int_{\Omega_+}|\nabla w|_g^2\,dV_K.
}
Note that $w^2$ and $|\nabla w|_g^2$ are both unconditional. Therefore
\eq{
 Q(w):=\int_{\Sn}\left(|\nabla w|_g^2-nw^2\right)dV_K\geq 0.
}

On the other hand, by \eqref{eq:li-orthogonal} and $\Delta l_i=-(n-1)l_i$,
\eq{
 \int_{\Sn}g(\nabla u,\nabla l_i)\,dV_K&=(n-1)\int_{\Sn}ul_i\,dV_K=0,\\
 \int_{\Sn}|\nabla l_i|_g^2\,dV_K&=(n-1)\int_{\Sn}l_i^2\,dV_K.
}
Hence
\eq{
 Q(u)=Q(w)+t^2\int_{\Sn}l_i^2\,dV_K\geq 0.
}

If equality holds, then $t=0$. Equality in \autoref{thm:log-arbitrary}, together with $\int_{\Omega_+}u\,dV_K=0$, forces $u=0$ on $\Omega_+$. Thus $u\equiv 0$ on $\Sn$.
\end{proof}

\begin{proof}[Proof of \autoref{thm:main-poincare}]
Following \cite[Proof of Thm. 8.7]{CKLR24}, we write
\eq{
F=\sum_{a\in\{0,1\}^n}F_a,
}
where $F_a$ is even with respect to $R_i$ when $a_i=0$ and odd with respect to $R_i$ when $a_i=1$. For $i=1,\ldots,n$, let $e_i\in\{0,1\}^n$ denote the multi-index whose $i$-th component is $1$ and whose other components are zero. We also write $\abs{a}=a_1+\cdots+a_n$.

Since $K$ is unconditional, every $R_i$ preserves $g$ and $dV_K$. If $a\ne b$, choose $i$ with $a_i\ne b_i$. Then both $F_aF_b$ and $g(\nabla F_a,\nabla F_b)$ are odd with respect to $R_i$, and hence
\eq{
\int_{\Sn}F_aF_b\,dV_K=0,
\quad
\int_{\Sn}g(\nabla F_a,\nabla F_b)\,dV_K=0.
}
Moreover, every $F_a$ with $a\ne 0$ is odd with respect to at least one coordinate reflection and therefore has zero $V_K$-mean.

The component $F_0$ is unconditional, and its mean is zero. Hence, by \cite[Thm. 8.12]{HI26},
\eq{
n\int_{\Sn}F_0^2\,dV_K
\le \int_{\Sn}\abs{\nabla F_0}_g^2\,dV_K,
}
with equality only if $F_0\equiv 0$.

Note that
\eq{
 \int_{\Sn}F_{e_i}l_i\,dV_K=\int_{\Sn}Fl_i\,dV_K=0,
 \quad i=1,\ldots,n.
}
If $\abs{a}=1$, then $a=e_i$ for some $i$. By
\autoref{cor:one-odd-orthogonal},
\eq{
 n\int_{\Sn}F_{e_i}^2\,dV_K
 \leq\int_{\Sn}|\nabla F_{e_i}|_g^2\,dV_K,
}
with equality only if $F_{e_i}\equiv 0$.

Now let $\abs{a}\geq2$. Then $a_i=a_j=1$ for some distinct indices $i$ and $j$, and thus $F_a$ is odd with respect to both $R_i$ and $R_j$. By \autoref{lem:two-coordinates-reflections},
\eq{
n\int_{\Sn}F_a^2\,dV_K \le
\int_{\Sn}\abs{\nabla F_a}_g^2\,dV_K,
}
and the inequality is strict unless $F_a\equiv 0$.

Using the two orthogonal decompositions, we conclude that
\eq{
\int_{\Sn}\abs{\nabla F}_g^2\,dV_K\geq n\int_{\Sn}F^2\,dV_K.
}
If equality holds, the equality cases above force every $F_a$ to vanish.
\end{proof}

\begin{proof}[Proof of \autoref{cor:uniqueness}]
We adapt the technique of \cite{IM23} to the present setting; $X$ is not admissible for applying \autoref{thm:main-poincare}. We write $l=(l_1,\ldots,l_n)$, $V=\frac{1}{n}\int_{\Sn}dV_K$, and
\eq{
A_{ij}=\int_{\Sn}l_i l_j\,dV_K.
}
The matrix $A$ is positive definite. We shall use two identities. First,
\eq{
\int_{\Sn}X_i l_j\,dV_K=V\delta_{ij}.
}
Indeed,
\eq{
\int_{\Sn}X_i\frac{x_j}{h}\,dV_K=\int_{\Sn}X_i x_j\,dS_K=\int_{\partial K}y_i\nu_j(y)\,dy=V\delta_{ij},
}
where $\nu$ denotes the outer unit normal vector of the boundary of $K$.

Second, since $dV_K=h^pdx$ and $p\neq -n$, by \cite[Lem. 4.2]{HI25}, we have
\eq{
\int_{\Sn}x_i x_j\,dV_K=V\delta_{ij}.
}

For $i=1,\ldots,n$, define
\eq{
F_i=X_i-V\sum_{j=1}^n(A^{-1})_{ij}l_j.
}
Then for every $k$,
\eq{
\begin{aligned}
\int_{\Sn}F_il_k\,dV_K
&=\int_{\Sn}X_il_k\,dV_K-V\sum_{j=1}^n(A^{-1})_{ij}\int_{\Sn}l_jl_k\,dV_K\\
&=V\delta_{ik}-V\sum_{j=1}^n(A^{-1})_{ij}A_{jk}=0.
\end{aligned}
}
Moreover, we have
\eq{
\int_{\Sn}F_i\,dV_K=0.
}
Thus, by \autoref{thm:main-poincare}, we have
\eq{\label{ineq:poincare-each-i}
n\int_{\Sn}F_i^2\,dV_K\le \int_{\Sn}\abs{\nabla F_i}_g^2\,dV_K.
}
Summing over $i$ yields
\eq{\label{ineq:poincare-summed}
n\sum_{i=1}^n\int_{\Sn}F_i^2\,dV_K\le 
\sum_{i=1}^n\int_{\Sn}\abs{\nabla F_i}_g^2\,dV_K.
}

Using $\int_{\Sn}X_i l_j\,dV_K=V\delta_{ij}$ and $\int_{\Sn}l_i l_j\,dV_K=A_{ij}$, we get
\eq{
\sum_{i=1}^n\int_{\Sn}F_i^2\,dV_K
=\int_{\Sn}\abs{X}^2\,dV_K-V^2\operatorname{tr}(A^{-1}).
}
Moreover, due to $\Delta l_i=-(n-1)l_i$,
\eq{
\int_{\Sn}g(\nabla l_i,\nabla l_j)\,dV_K=(n-1)A_{ij}
}
and
\eq{
\int_{\Sn}g(\nabla X_i,\nabla l_j)\,dV_K
=-\int_{\Sn}X_i\Delta l_j\,dV_K=(n-1)V\delta_{ij}.
}
Also, we have
\eq{
\sum_{i=1}^n\abs{\nabla X_i}_g^2=h\left(\bar{\Delta} h+(n-1)h\right).
}
Consequently,
\eq{
\sum_{i=1}^n\int_{\Sn}\abs{\nabla F_i}_g^2\,dV_K=\int_{\Sn}h\left(\bar{\Delta} h+(n-1)h\right)\,dV_K-(n-1)V^2\operatorname{tr}(A^{-1}).
}
Substituting these formulas into \eqref{ineq:poincare-summed}, we obtain
\eq{ \label{ineq:key-estimate-L2-norm-X}
n\int_{\Sn}\abs{X}^2\,dV_K\le \int_{\Sn}h\left(\bar{\Delta} h+(n-1)h\right)\,dV_K+V^2\operatorname{tr}(A^{-1}).
}

For every $a\in\R^n$, by the isotropy identity and the Cauchy--Schwarz inequality,
\eq{ 
V\ip{a}{A^{-1}a}
=\int_{\Sn}\ip{hx}{a}\ip{l}{A^{-1}a}\,dV_K
\le
\left(\int_{\Sn}h^2\ip{x}{a}^2\,dV_K\right)^{1/2}\ip{a}{A^{-1}a}^{1/2}.
}
Here we used 
\eq{
\int_{\Sn}\ip{l}{A^{-1}a}^2\,dV_K
=\ip{A^{-1}a}{A(A^{-1}a)}
=\ip{a}{A^{-1}a}.
}
Hence $V^2\ip{a}{A^{-1}a}\le \int_{\Sn}h^2\ip{x}{a}^2\,dV_K$. Taking traces yields
\eq{
V^2\operatorname{tr}(A^{-1})\le \int_{\Sn}h^2\,dV_K,
}
and thus
\eq{ \label{ineq:poincare-for-|X|^2}
n\int_{\Sn}\abs{X}^2\,dV_K\le \int_{\Sn}h\left(\bar{\Delta} h+(n-1)h\right)\,dV_K+\int_{\Sn}h^2\,dV_K.
}

Now using $dV_K=h^pdx$ and $\abs{X}^2=h^2+\abs{\bar{\nabla} h}_{\bar{g}}^2$, we arrive at
\eq{\label{ineq:poincare-for-h}
n\int_{\Sn}h^{p+2}\,dx+n\int_{\Sn}h^p\abs{\bar{\nabla} h}_{\bar{g}}^2\,dx\le \int_{\Sn}h^{p+1}\bar{\Delta} h\,dx+n\int_{\Sn}h^{p+2}\,dx.
}
Since
\eq{
\int_{\Sn}h^{p+1}\bar{\Delta} h\,dx=-(p+1)\int_{\Sn}h^p\abs{\bar{\nabla} h}_{\bar{g}}^2\,dx,
}
we obtain
\eq{
(n+p+1)\int_{\Sn}h^p\abs{\bar{\nabla} h}_{\bar{g}}^2\,dx\le0.
}
Hence, when $p>-n-1$, we conclude that $h=1$.

It remains to consider $p=-n-1$. In this case, equality holds in  \eqref{ineq:poincare-for-h}; consequently, equality also holds in \eqref{ineq:poincare-summed} and hence for each $F_i$:
\eq{
n\int_{\Sn}F_i^2\,dV_K
=\int_{\Sn}\abs{\nabla F_i}_g^2\,dV_K,\quad i=1,\ldots,n.
}
By the equality characterization in \autoref{thm:main-poincare},
\eq{
F_i\equiv 0,\quad i=1,\ldots,n.
}
Thus,
\eq{
X=M\frac{x}{h},\quad \text{where}\quad M:=VA^{-1}\implies h^2=x^TMx.
}
Thus $K$ is a centered ellipsoid, and
\eq{
h^{-n-2}=\det\left(\bar{\nabla}^2h+h\bar{g}\right)=\det(M)\,h^{-(n+1)}.
}
It follows that $h=1$.
\end{proof}

\section*{Acknowledgment}
Hu was supported by the National Key Research and Development Program of China (Grant No. 2021YFA1001800). Ivaki was supported by the Austrian Science Fund (FWF) under Project P36545.

\vspace{5mm}

\noindent
\begin{minipage}[t]{0.40\textwidth}
\textsc{School of Mathematical\\ Sciences, Beihang University,\\ Beijing 100191, China}\\
\email{\href{mailto:huyingxiang@buaa.edu.cn}{huyingxiang@buaa.edu.cn}}
\end{minipage}
\hfill
\begin{minipage}[t]{0.40\textwidth}
\textsc{Institut f\"{u}r Diskrete\\ Mathematik und Geometrie,\\ Technische Universit\"{a}t Wien, Wiedner Hauptstra{\ss}e 8--10,\\ 1040 Wien, Austria}\\
\email{\href{mailto:mohammad.ivaki@tuwien.ac.at}{mohammad.ivaki@tuwien.ac.at}}
\end{minipage}

\end{document}